\newtheorem{thm}{Theorem}[section]
\newtheorem{prop}[thm]{Proposition}
\newtheorem{cor}[thm]{Corollary}
\newtheorem{conj}[thm]{Conjecture}
\theoremstyle{definition}
\newtheorem{exa}[thm]{Example}
\newtheorem{rem}[thm]{Remark}}
\newcommand{\C}{\mathbb{C}}
\newcommand{\T}{\mathbb{T}}
\newcommand{\CC}{\mathcal{C}}
\newcommand{\CP}{\mathbb{C}P}
\newcommand{\RP}{\mathbb{R}P}
\renewcommand{\epsilon}{\varepsilon}
\newcommand{\R}{\mathbb{R}}
\newcommand{\Z}{\mathbb{Z}}
\newcommand{\x}{\underline{x}}
\begin{document}

\title{On the invariance of  Welschinger invariants}

\author{Erwan Brugall\'e}
\address{Erwan Brugall\'e, Université de Nantes, Laboratoire de
  Mathématiques Jean Leray, 2 rue de la Houssinière, F-44322 Nantes Cedex 3,
France}
\email{erwan.brugalle@math.cnrs.fr}

\subjclass[2010]{Primary 14P05, 14N10; Secondary 14N35, 14P25}
\keywords{Real enumerative geometry, Welschinger invariants, real
  rational algebraic surfaces, refined invariants}

\begin{abstract}
  We collect in this note some observations about 
  original Welschinger invariants  defined
 in \cite{Wel1}.
 None of their proofs is difficult, nevertheless these remarks do not seem
 to have been made before.
Our main result  is that when $X_\R$ is a real rational algebraic
surface, Welschinger invariants
  only depend on  the number of real
  interpolated points, and
some homological data associated to $X_\R$.
  This strengthened the invariance statement initially
proved by Welschinger. 

This main result follows easily from a formula relating Welschinger
invariants of two real symplectic manifolds differing by a 
surgery along a real Lagrangian sphere. In its turn, once one believes
that such formula may hold, its proof 
 is a mild adaptation of the proof of analogous formulas previously
 obtained by the author on the one hand, and by Itenberg, Kharlamov and
 Shustin on the other hand.

 We apply the two aforementioned results to
complete the computation of Welschinger invariants of
 real rational algebraic surfaces, and to obtain 
vanishing, sign, and sharpness results for these invariants that
generalize previously known statements.
We also discuss some hypothetical relations of our work with tropical
refined invariants defined in \cite{BlGo14} and \cite{GotSch16}.
\end{abstract}
\maketitle

\section{Main results}\label{sec:main}
A \textit{real symplectic manifold} $X_\R=(X,\omega_X,\tau_X)$ is a symplectic
manifold $(X,\omega_X)$ equipped with an anti-symplectic involution
$\tau_X$. The \textit{real part} of $(X,\omega_X,\tau_X)$, denoted by
$\R X$, is by
definition the fixed point set of $\tau_X$. An almost complex
structure $J$ on $X$ is called \emph{$\tau_X$-compatible} if it is
tamed by $\omega$, and if $\tau_X$ is $J$-anti-holomorphic.
In this note, the manifold  $X_\R$ will always be compact of 
dimension 4 with a non-empty real part, and we denote by
$H_2^{-\tau_X}(X;\Z)$ the space of $\tau_X$-anti-invariant classes. 
A non-singular
projective real algebraic variety  is always implicitly assumed to be
equipped with some
Kähler form which turns it into a real symplectic manifold.
All algebraic surfaces considered here are assumed to be projective
and non-singular.
\medskip

Let $X_\R=(X,\omega_X,\tau_X)$ be a real compact symplectic manifold of
dimension 4, and
 denote by 
$L_1,\cdots,L_k$ the connected components of $\R X$.
Choose a class
$d\in H_2(X;\Z)$, and
a vector $\rho=(r_1,\cdots,r_k)\in\Z_{\ge 0}^k$ such that
\[c_1(X)\cdot d - 1- \sum_{i=1}^kr_i=2s \in 2\Z_{\ge 0}.
\]
Choose a configuration $\x$ made of $r_i$ points in $L_i$ for
$i=1,\cdots k$, and  $s$ 
 pairs of $\tau_X$-conjugated 
points in $X\setminus \R X$.
Given a   $\tau_X$-compatible almost complex structure $J$,
we denote by $\mathcal C_{X_\R}(d,\x,J)$
 the set of real rational
$J$-holomorphic curves  in $X$ realizing the class $d$, and passing
 through $\x$. Then we define the integer
 \[
 W_{X_\R,\rho}(d;s)= \sum_{C\in\mathcal C_{X_\R}(d,\x,J)}(-1)^{m(C)},
 \]
where
$m(C)$ is the number of nodes of  $C$ in $\R X$ with two
$\tau_X$-conjugated branches.
For a generic choice of $J$, the set  $\mathcal C_{X_\R}(d,\x,J)$ is finite, and
 $W_{X_\R,\rho}(d;s)$
depends neither on $\x$, $J$, nor on the deformation class of
 $X_\R$ (see \cite{Wel1,Wel11}).
 We call these numbers the \emph{Welschinger invariants of $X_\R$}.
 The main result of this paper, Theorem \ref{thm:main0} below,
 is that when $X_\R$ is a real rational
 algebraic surface,
 Welschinger invariants eventually only depends on $s$ and some
 homological data of $X_\R$.
 
 \begin{rem}\label{rem:vanish}
The set $\mathcal C_{X_\R}(d,\x,J)$ is clearly empty when either
$c_1(X)\cdot d \le 0$, or
$d\notin H_2^{-\tau_X}(X;\Z)$, or the partition
$\rho$ contains two positive elements. This implies that
$W_{X_\R,\rho}(d;s)=0$ in these cases.
 \end{rem}
 \begin{rem}
   We
     restrict ourselves  to the original Welschinger invariants as defined in
     \cite{Wel1}, and do not consider the more general modified Welschinger invariants
     defined in \cite{IKS11,IKS13,IKS14}, and also
     considered in \cite{BP14,Bru16}. Further,
     because of Remark \ref{rem:vanish}, 
     the invariants $W_{X_\R,\rho}(d;s)$ are usually considered with
    a partition $\rho$ containing a single positive entry
    corresponding to a connected component $L$ of $\R X$. In this
    case, the invariant
     $W_{X_\R,\rho}(d;s)$ in this text corresponds to the invariant
$W_{X_\R,L,[\R X\setminus L]}(d;s)$ in \cite{BP14,Bru16}, to the invariant
     $W_s(X_\R,d,L,0)$ in \cite{IKS14}, and to the invariant
     $W(X_\R,d,L,0)$ in \cite{IKS11,IKS13} if $s=0$. Our
    motivation to consider 
    arbitrary partitions $\rho$ comes from Theorem \ref{thm:main0}.
    \end{rem}

 Two real rational algebraic surfaces $X_{1,\R}$ and $X_{2,\R}$
 are said to be
 \emph{homologically equivalent} if both are obtained, up to deformation, as
 a real blow-up $\pi_i:X_{i,\R}\to X_{0,\R}$ of a real minimal algebraic surface
$X_{0,\R}$ at  $p$ distinct real points  and   
$q$ distinct pairs of $\tau_{X_0}$-conjugated points. We emphasize
 that the distributions
 of the $p$ real points  among 
 connected components of  $\R X_{0}$ may not coincide for $\pi_1$ and $\pi_2$.
 Note nevertheless that
 \[
 \chi(\R X_{1})=\chi(\R X_{2})=\chi(\R X_0)-p.
 \]
Denoting by $E_1,\cdots E_p$ (resp.
$F_1,\overline F_1,\cdots,F_q,\overline F_q$) the exceptional divisors
coming from the blow-ups at real points (resp. at pairs of
$\tau_{X_0}$-conjugated points), the map $\pi_i$ induces the  following 
decomposition 
\[
H_2(X_i;\Z)= H_2(X_0;\Z) \oplus \bigoplus_{j=1}^p
\Z[E_j]\oplus\bigoplus_{j=1}^q (\Z[F_j]\oplus
\Z[\overline F_j])
\]
which is orthogonal with respect to the intersection form.
Furthermore,  the action of $\tau_{X_i}$ is given by
\[
\tau_{X_i,*}|_{H_2(X_0;\Z)}=\tau_{X_0,*}\qquad
\tau_{X_i,*}([E_j])=-[E_j]  \mbox{ for }j=1,\cdots,p\qquad
\tau_{X_i,*}([F_j])=-[\overline F_j]  \mbox{ for }j=1,\cdots,q.
\]
In particular, the two maps $\pi_1$ and $\pi_2$ provide an identification of the
groups $H_2(X_1;\Z)$ and $H_2(X_2;\Z)$ commuting with  both 
intersection forms and   action of the anti-symplectic involutions.
 We denote by $[X_\R]$ the
homological equivalence class of a  real rational algebraic surface $X_\R$.

\begin{thm}\label{thm:main0}
If   $X_\R$ is a 
real  rational algebraic surface, then 
 $W_{X_\R,\rho}(d;s)$ does
not depend on $\rho$, nor on a particular representative of
$[X_\R]$.
\end{thm}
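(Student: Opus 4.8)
The plan is to deduce Theorem \ref{thm:main0} from a surgery formula relating the Welschinger invariants of two real symplectic $4$-manifolds that differ by a surgery along a real Lagrangian sphere $S$ with real part $\R S\cong S^1$; this is the second main result announced in the introduction, and I take it as the principal input. Granting it, the entire content of the theorem reduces to the combinatorial statement that any two pairs consisting of a representative of $[X_\R]$ together with a partition $\rho$, sharing the same homological data and the same integer $s$, can be joined by a finite chain of such surgeries, each leaving $W_{X_\R,\rho}(d;s)$ unchanged.

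First I would reduce to elementary moves. By definition a representative of $[X_\R]$ is a real blow-up $\pi\colon X_\R\to X_{0,\R}$ of a fixed real minimal surface $X_{0,\R}$ at $p$ real points and $q$ conjugate pairs, and the only freedom within the class is the distribution of the $p$ real centres among the connected components of $\R X_0$, together with the choice of $\rho$. Any two such data differ by a sequence of moves of a single type: take one real blow-up centre on a component $L_i$, or one real constraint point of $\x$ on $L_i$, and transport it to another component $L_j$. It therefore suffices to realize each such elementary transfer by a surgery along a real Lagrangian sphere and to verify that the surgery formula yields an equality of the two invariants.

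Next I would exhibit the connecting spheres. For a transfer between two components $L_i,L_j$ of $\R X$ I would produce a real Lagrangian sphere $S$ of self-intersection $-2$ whose real part is a circle meeting the affected components, so that the surgery along $S$ simultaneously replaces $X_\R$ by a homologically equivalent surface in which the relevant centre or constraint has been moved to $L_j$, and carries $\x$ to a configuration realizing the new partition. Concretely, a $(-2)$-class of the form $[E_i]-[E_j]$ built from two real exceptional divisors, or the vanishing cycle of a nodal degeneration of a fibre of an auxiliary conic bundle structure on $X_{0,\R}$, should provide such a sphere. The identification of $H_2(X_1;\Z)$ with $H_2(X_2;\Z)$ given before the statement guarantees that the homology classes, the intersection form and the action of $\tau_X$ all match across the surgery, so the class $d$ on both sides corresponds and $s$ is preserved.

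The main obstacle, and the step where the surgery formula is used in full strength, is to confirm that each elementary transfer is genuinely realized by a surgery along a real Lagrangian sphere keeping us inside the class of real rational surfaces with non-empty real part, and that the formula specializes to a plain equality $W_{X_\R,\rho}(d;s)=W_{X'_\R,\rho'}(d;s)$ rather than producing correction terms. I expect this because the surgery is supported near $S$ and preserves the signed count: the real rational $J$-holomorphic curves through $\x$ are put in bijection with those through the transported configuration, with the number $m(C)$ of $\tau_X$-conjugated nodes unchanged. Once the existence of the connecting spheres and the vanishing of the corrections are established, iterating over the chain of elementary moves shows that $W_{X_\R,\rho}(d;s)$ depends only on $s$ and on $[X_\R]$, which is exactly the assertion of the theorem.
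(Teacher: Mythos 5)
Your plan has a genuine gap, and it sits exactly at the step you yourself flag as ``the main obstacle''. A surgery along a real Lagrangian sphere changes $\chi(\R X)$ by $\pm 2$ (with the convention $\chi(\emptyset)=0$), whereas any two representatives of $[X_\R]$ satisfy $\chi(\R X_1)=\chi(\R X_2)=\chi(\R X_0)-p$, and two choices of $\rho$ live on the very same surface. So no single surgery can realize your ``elementary transfer'': the surgered surface always lies outside the homological equivalence class, and in particular the class $[E_i]-[E_j]$ move you invoke relates a blow-up at two real points to a blow-up at a conjugate pair (as in Proposition \ref{prop:wel}), not two distributions of the same number of real centres. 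Moreover, your hope that the formula ``specializes to a plain equality \dots rather than producing correction terms'', justified by a purported bijection between real curves through $\x$ and through the transported configuration preserving $m(C)$, is unfounded and false in general: Theorem \ref{thm:main} carries the correction $2\sum_{k\ge 1}(-1)^k\,W_{X_\R,\widetilde\rho}(d-k[S];s)$, and these terms do not vanish --- Corollary \ref{cor:wel}, which recovers Welschinger's formula, is an instance where the correction term $-2W_{\widetilde X_\R,\rho}(d-2E;s)$ is essential. If such a curve-by-curve bijection existed, the theorem would be immediate; asserting it amounts to assuming the conclusion.

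The idea your proposal is missing is the one the paper's proof is built on: do not compare two representatives directly, but relate each of them to a surface on which the ambiguity cannot even be formulated. Concretely, $X_\R$ degenerates to a nodal real rational surface $\overline X_\R$ with \emph{connected} real part and only real nodes; blowing up the nodes gives a non-singular $Z_\R$ with connected real part, and $X_\R$ is, up to deformation, obtained from $Z_\R$ by surgeries along the disjoint exceptional $(-2)$-spheres $S_1,\dots,S_\kappa$. Iterating Theorem \ref{thm:main} over these spheres writes $W_{X_\R,\rho}(d;s)$ as a universal expression in the invariants $W_{Z_\R,\widetilde\rho}\bigl(d-\sum_i k_i[S_i];s\bigr)$. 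Since $\R Z$ is connected, $\widetilde\rho$ has a single entry, forced to equal $c_1(X)\cdot d-1-2s$: downstairs there is no residual choice of partition, and the whole expression depends only on $s$ and on homological data shared by all representatives. The correction terms are not shown to vanish; they are shown to be literally identical for both sides of the comparison, which is what kills the dependence on $\rho$ and on the representative simultaneously. A repaired, zig-zag version of your elementary moves (surger \emph{down} along a sphere whose two real fixed points lie on $L_i$ and on $L_j$, merging these components, then apply the formula upwards to both configurations) can be made to work, but it still passes through a surface outside $[X_\R]$ and still uses the full formula with corrections --- never a plain equality of curve counts.
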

As a consequence of Theorem  \ref{thm:main0},  we simply denote by 
$W_{[X_\R]}(d;s)$ the invariant  $W_{X_\R,\rho}(d;s)$.
Note that some particular instances of Theorem \ref{thm:main0} have
already been noticed 
in  \cite[Corollary 6.11 and Theorem 7.5]{Bru14} and \cite[Corollary 4.5]{Bru16}.
Further, it
follows from the classification of  real rational algebraic surfaces up
to deformation established in
\cite[Main Theorem and Theorem 2.4.1]{DegKha02}
that all Welschinger
invariants of projective real rational algebraic surfaces are
determined by the following ones\footnote{Recall that all real algebraic
  surfaces considered here are assumed to have a non-empty real part.}:
\begin{itemize}
\item  $W_{[\CP^2_{p,q}]}(d;s)$, where $\CP^2_{p,q}$ is a real
  blow-up  of $\CP^2$ in $p$ real points and $q$ pairs of complex conjugated
  points; 
\item $W_{[QH_{0,q}]}(d;s)$, where $QH_{0,q}$ is the real
  blow-up  of
  the quadric hyperboloid in $\CP^3$ in  $q$ pairs of complex conjugated
  points;

\item  $W_{[QE_{0,q}]}(d;s)$, where $QE_{0,q}$ is a real
  blow-up  of
  the quadric ellipsoid in $\CP^3$ in  $q$ pairs of complex conjugated
  points;

\item $W_{[CBN_{p,q}]}(d;s)$, where $CBN_{p,q}$ is a real
  blow-up in  $p$ real points and $q$ pairs of complex conjugated
  points of a real minimal conic bundle whose real part consists
  in the disjoint union of $N\ge 2$ spheres $S^2$;

\item  $W_{[DP2_{p,q}]}(d;s)$, where $DP2_{p,q}$ is a real
  blow-up in  $p$ real points and $q$ pairs of complex conjugated
  points of a real minimal del Pezzo surface of degree 2 whose real part consists
  in the disjoint union of $4$ spheres $S^2$;
\item $W_{[DP1_{p,q}]}(d;s)$, where $DP1_{p,q}$ is a real
  blow-up in  $p$ real points and $q$ pairs of complex conjugated
  points of a real minimal del Pezzo surface of degree 1 whose real part consists
  in the disjoint union of $\RP^2$ and $4$ spheres $S^2$.

\end{itemize}

 \begin{rem}
 Loosely speaking, 
 Theorem \ref{thm:main0}  states that $W_{X_\R,\rho}(d;s)$ only
 depends on $s$ and the lattice $H_2(X;\Z)$  equipped with the
 intersection form and the action of $\tau_{X,*}$. It may be
 interesting to work this out more rigorously. It may  also be
 interesting to study generalizations of Theorem \ref{thm:main0} to modified
 Welschinger invariants introduced in \cite{IKS11}, as well as to to  higher
 genus Welschinger invariants introduced in \cite{Shu14}, or to the
higher dimensional invariants recently defined in \cite{Geor13,GeoZin15}.
 \end{rem}

  Theorem
 \ref{thm:main0} easily implies next corollary, which
 generalizes \cite[Theorem
   1.1(1)]{BP14} in the case $F=[\R X_\R\setminus L]$.
 \begin{cor} \label{cor:maincor}
Let $X_\R$ be a compact
real  rational algebraic surface with a disconnected real
part. Suppose that $X_\R$ is a real blow-up of another real  rational
algebraic surface in at least two real points, and denote by $E_1$ and
$E_2$ the corresponding exceptional divisors.
Then for any $d\in H_2(X;\Z)$ such that both $d\cdot [E_1]$ and
$d\cdot [E_2]$ are odd, one has $W_{[X_\R]}(d;s)=0$.
 \end{cor}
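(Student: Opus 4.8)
The plan is to exploit the freedom granted by Theorem~\ref{thm:main0} to replace $X_\R$ by a conveniently chosen homologically equivalent representative, and then to prove that the relevant moduli space is empty by a mod~$2$ intersection argument on the real locus. The hypothesis that $\R X$ is disconnected is exactly what makes the reduction possible.

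First I would write $X_\R$, up to deformation, as a real blow-up of a real minimal surface $X_{0,\R}$ at $p$ real points and $q$ pairs of $\tau_{X_0}$-conjugated points, so that $E_1$ and $E_2$ are among the real exceptional divisors $E_1,\dots,E_p$. By definition of $[X_\R]$, the distribution of these $p$ real points among the connected components of $\R X_0$ is not fixed, and $W_{[X_\R]}(d;s)$ does not depend on it by Theorem~\ref{thm:main0}. Since $\R X_0$, and hence $\R X$, has at least two connected components, I can therefore choose a representative in which the points underlying $E_1$ and $E_2$ lie on two \emph{distinct} components of $\R X$; then the circles $\R E_1\cong\R E_2\cong\RP^1$ sit in different components $L_{i_1}\neq L_{i_2}$. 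Because the identification of second homology groups commutes with the intersection forms, the hypotheses that $d\cdot[E_1]$ and $d\cdot[E_2]$ be odd are preserved, while $W_{[X_\R]}(d;s)$ is unchanged.

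Next, fixing a generic $\tau_X$-compatible $J$ and a generic configuration $\x$, I would analyse the real locus of a curve $C\in\mathcal C_{X_\R}(d,\x,J)$. For generic data such a $C$ is an irreducible real rational curve, so its normalization is $\CP^1$ equipped with an anti-holomorphic involution whose fixed locus is either empty or a single circle; consequently the one-dimensional part of $\R C$ is connected and lies in a single component $L_{i_0}$ of $\R X$ (possible solitary real nodes contribute only to $H_0$). Thus $[\R C]\in H_1(\R X;\Z/2)=\bigoplus_i H_1(L_i;\Z/2)$ is supported on $L_{i_0}$. The key input is the parity relation
\[
[\R C]\cdot[\R E_j]\equiv d\cdot[E_j]\pmod 2,\qquad j=1,2,
\]
coming from the fact that the non-real points of $C\cap E_j$ occur in $\tau_X$-conjugated pairs. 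Since the mod~$2$ intersection form is block-diagonal with respect to the components of $\R X$, the nonvanishing of $[\R C]\cdot[\R E_j]$ forces $L_{i_0}=L_{i_j}$; taking $j=1$ and $j=2$ yields $L_{i_1}=L_{i_0}=L_{i_2}$, contradicting $L_{i_1}\neq L_{i_2}$. Hence $\mathcal C_{X_\R}(d,\x,J)=\emptyset$, and therefore $W_{[X_\R]}(d;s)=0$.

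The main obstacle is to establish the parity relation for genuinely $J$-holomorphic (rather than algebraic) curves: one must either choose $J$ so that the disjoint spheres $E_1,E_2$ are $J$-holomorphic, and apply positivity of intersections together with the invariance of $W$, or transfer the statement from the integrable structure via the Borel--Haefliger cycle map and then invoke the $J$-independence of the count. The connectedness of the real one-cycle $\R C$ and the harmless role of solitary real nodes should be spelled out but present no serious difficulty.
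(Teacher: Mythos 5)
Your proposal is correct and follows essentially the same route as the paper: the paper's proof likewise invokes Theorem~\ref{thm:main0} to pass to a representative of $[X_\R]$ in which $\R E_1$ and $\R E_2$ lie on distinct connected components of $\R X$, and then concludes that $\mathcal C_{X_\R}(d,\x,J)$ is empty ``by connectedness of $\RP^1$''. Your mod~$2$ parity relation $[\R C]\cdot[\R E_j]\equiv d\cdot[E_j]\pmod 2$ together with the connectedness of the one-dimensional part of $\R C$ is precisely the detailed justification of that one-line emptiness claim, so you have simply made explicit what the paper leaves implicit.
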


 Combining Theorem \ref{thm:main0} with
 \cite[Theorem 1.1]{Wel4} and Corollary \ref{cor:maincor}, we obtain the
following.
\begin{thm}\label{thm:sign}
Let $X_\R$ be a compact
real  rational algebraic surface with a disconnected real
part, and assume that
 $ c_1(X)\cdot d -1-2s>0$. Then
 one has
 \[
 (-1)^{\frac{d^2-c_1(X)\cdot d+2}{2}}\cdot
W_{[X_\R]}(d;s)\ge 0. 
\]
Furthermore, the invariant $W_{[X_\R]}(d;s)$
is sharp in the following sense: there exists a compact
real  rational algebraic surface $Y_\R$ in $[X_\R]$,  a
real configuration $\x$ of $c_1(X)\cdot d -1$ points in $Y$ with
$|\x \cap \R Y|=c_1(Y)\cdot d-2s$, and a generic $\tau_Y$-compatible
almost complex structure $J$ on $Y$ such that
\[
Card(\CC_{Y_\R}(d,\x,J)) = |W_{[X_\R]}(d;s)|.
\]
\end{thm}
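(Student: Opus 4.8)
The plan is to reduce Theorem~\ref{thm:sign} to two independent inputs: the sign/positivity half follows almost immediately from combining the invariance provided by Theorem~\ref{thm:main0} with the sign result of \cite[Theorem 1.1]{Wel4}, while the sharpness half requires exhibiting an explicit representative of the homological class $[X_\R]$ together with a favorable almost complex structure. I would first observe that by Theorem~\ref{thm:main0} the quantity $W_{[X_\R]}(d;s)$ is independent of the chosen partition $\rho$ and of the particular representative of $[X_\R]$; this is precisely what licenses us to replace $X_\R$ by any surface $Y_\R$ homologically equivalent to it. For the positivity statement, I would apply \cite[Theorem 1.1]{Wel4} to a conveniently chosen representative with a distinguished real component $L$ carrying all $r$ real points, noting that the sign factor $(-1)^{(d^2 - c_1(X)\cdot d+1)/2}$ is a homological invariant (both $d^2$ and $c_1(X)\cdot d$ are preserved under the identification of second homology groups described before the theorem), so the asserted inequality is well-posed at the level of $[X_\R]$.

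For the sharpness half, the key idea is to pick the representative $Y_\R$ and the configuration $\x$ so that every curve counted in $\CC_{Y_\R}(d,\x,J)$ contributes with the \emph{same} sign. First I would use the hypothesis that $X_\R$ has disconnected real part and is a blow-up in real points to select, among the representatives of $[X_\R]$, one where the distribution of real points is as favorable as possible; here the freedom in redistributing the $p$ real points among the connected components of $\R X_0$ (stressed in the definition of homological equivalence) is exactly the leverage we exploit. Then I would invoke the sharpness construction already present in \cite[Theorem 1.1]{Wel4}: choosing a configuration $\x$ with $|\x\cap \R Y| = c_1(Y)\cdot d - 2s$ real points arranged so that there exists a generic $\tau_Y$-compatible $J$ for which all real $J$-holomorphic curves through $\x$ have nodes contributing with a single sign, one forces $(-1)^{m(C)}$ to be constant over $\CC_{Y_\R}(d,\x,J)$. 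Consequently the signed count $W_{[X_\R]}(d;s)$ equals $\pm\mathrm{Card}(\CC_{Y_\R}(d,\x,J))$ in absolute value, which is the claimed equality.

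The main obstacle, and the step demanding the most care, is verifying that the arithmetic constraints match up across the two frameworks. Specifically, I must check that the point count $c_1(X)\cdot d - 1$ total with $|\x\cap \R Y| = c_1(Y)\cdot d - 2s$ real points and the correct number of conjugate pairs is consistent with the defining relation $c_1(X)\cdot d - 1 - \sum_i r_i = 2s$, and that under the homological identification $c_1(X)\cdot d = c_1(Y)\cdot d$ so that the stated numerology transfers from $X_\R$ to $Y_\R$ without loss. The hypothesis $c_1(X)\cdot d - 1 - 2s > 0$ guarantees that at least one real point is imposed, which is what makes the sharpness construction of \cite{Wel4} applicable and ensures the distinguished component $L$ genuinely constrains the enumeration. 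Once these bookkeeping identities are confirmed, the proof is essentially the concatenation of Theorem~\ref{thm:main0} with the cited positivity and sharpness results, with no further geometric input required.
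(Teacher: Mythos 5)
There is a genuine gap: you apply \cite[Theorem 1.1]{Wel4} in a regime where it does not hold, and you omit the case distinction that the proof actually requires. Welschinger's sign and sharpness theorem concerns configurations with the maximal number of pairs of conjugated points, i.e.\ with \emph{at most one} real point, and that point must lie on a connected component of the real part homeomorphic to $S^2$. Your plan --- ``apply \cite[Theorem 1.1]{Wel4} to a representative with a distinguished component $L$ carrying all $r$ real points'' --- is therefore only legitimate when $r=c_1(X)\cdot d-1-2s$ equals $1$, and even then one must require $L\cong S^2$, a condition your write-up never imposes (it is precisely to guarantee such a spherical component that one invokes Theorem \ref{thm:main0} together with the classification of real rational surfaces). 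For $r\ge 2$ no version of Welschinger's theorem applies, and your intermediate claim that the configuration can be arranged so that all curves through it carry the same sign is unsupported; worse, it is in tension with the statement being proved: when $r\ge 2$ the invariant vanishes, while the set $\CC_{Y_\R}(d,\x,J)$ for a configuration concentrated on a single component has no reason to be empty, so whenever it is non-empty it must contain curves of opposite signs.

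The missing ingredient for $r\ge 2$ is a vanishing argument, and this is exactly where the hypothesis that $\R X$ is disconnected enters. By Theorem \ref{thm:main0} the invariant $W_{[X_\R]}(d;s)$ does not depend on $\rho$, so one may place two of the real points on two \emph{distinct} connected components of $\R X$; since the real part of a real rational curve is connected, no curve passes through such a configuration (Remark \ref{rem:vanish}), hence $W_{[X_\R]}(d;s)=0$, and the sharpness statement holds with this very configuration because $\CC_{X_\R}(d,\x,J)=\emptyset$. The correct proof is then the concatenation of the two cases: $r\ge 2$ by the vanishing just described, and $r=1$ by choosing (via Theorem \ref{thm:main0}) a representative having a component $L\cong S^2$, placing the unique real point on $L$, and only then citing \cite[Theorem 1.1]{Wel4}. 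Incidentally, the hypothesis you use that $X_\R$ ``is a blow-up in real points'' is not part of this theorem; it belongs to Corollary \ref{cor:maincor}.
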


\begin{rem}
  A configuration $\x$ and a $\tau_Y$-compatible
almost complex structure $J$ as in Theorem \ref{thm:sign}
may not exist for any
representative $Y_\R$ of $[X_\R]$, even up to deformation, see \cite[Remark 6.13]{Bru14}.
\end{rem}

One of the main ingredients in our proof of Theorem \ref{thm:main0} is Theorem
\ref{thm:main} that relates Welschinger invariants of two  real
symplectic 4-manifolds differing by a so-called
\emph{surgery along a real Lagrangian sphere}. We refer to
Section \ref{sec:surgery} for more details about this operation, and
for a statement of Theorem \ref{thm:main} together with its proof. 
This latter theorem  partially generalizes both \cite[Corollary 4.2]{IKS13} and
\cite[Theorem 1.1, Remark 1.3]{Bru16}. We point out that
its proof is an easy adaptation
of the proof of  \cite[Corollary 4.2]{IKS13}, using
\cite[Theorem 2.5(1)]{BP14}. It just required 
to believe in the correctness of the statement to prove it.  

Combining \cite[Main Theorem and Theorem 2.4.1]{DegKha02}
together with  the classical rigid isotopy classifications
of plane real quartics, of real cubic sections of the quadratic cone
in $\CP^3$, and of real quadrics in $\CP^3$ (see for example
\cite{DK}), one easily classifies real rational
algebraic surfaces 
 up to  deformation,
real blow-up, and surgery along a real
Lagrangian sphere: any real rational surface is obtained by a finite
sequence of these three operations starting from either 
$\CP^2$ or the quadric hyperboloid $QH$.
In particular we have the following result.
\begin{thm}\label{thm:all W}
  Let $X_\R$ be a real algebraic rational surface. Then
 by  finitely many successive applications of Theorem
\ref{thm:main}, all Welschinger
invariants of $X_\R$ can be computed out of  
Welschinger
invariants of either $\CP^2_{p,0}$ or $QH$.
\end{thm}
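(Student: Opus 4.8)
The plan is to deduce Theorem \ref{thm:all W} from the reduction of real rational surfaces to $\CP^2$ and $QH$ recalled just above, by tracking the behaviour of Welschinger invariants under each of the three elementary operations. By Theorem \ref{thm:main0}, the invariant $W_{[X_\R]}(d;s)$ depends only on the homological equivalence class of $X_\R$ and on $s$; in particular I am free to replace $X_\R$ by any convenient representative of $[X_\R]$, and deformations do not affect the invariants at all. Thus the only operations whose effect must be understood are real blow-ups and surgeries along real Lagrangian spheres, and I would organise the argument as an induction on the length $\ell$ of a sequence of such operations connecting $X_\R$ to $\CP^2$ or $QH$, the existence of such a sequence being guaranteed by the classification obtained from \cite{DegKha02} together with the rigid isotopy classifications in \cite{DK}.

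The base cases $X_\R=\CP^2_{p,0}$ and $X_\R=QH$ require nothing. For the inductive step I distinguish according to the operation separating $X_\R$ from a surface $X'_\R$ one step closer to the base. If the two surfaces differ by a surgery along a real Lagrangian sphere, then Theorem \ref{thm:main} expresses $W_{[X_\R]}(d;s)$ as a $\Z$-linear combination of invariants $W_{[X'_\R]}(d';s')$, each computable from the base by the induction hypothesis; this is the step in which Theorem \ref{thm:main} is actually invoked. If the two surfaces differ by a real blow-up at real points, then $X_\R$ either remains in the family $\CP^2_{p,0}$, or is homologically equivalent to a blow-up of $QH$ that a further deformation and surgery brings back into the allowed list, so no genuine reduction is needed.

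The remaining case, a blow-up at a pair of $\tau$-conjugated points with exceptional divisors $F_j,\overline F_j$, is the one I would treat most carefully. Here the class $[F_j]-[\overline F_j]$ is $\tau$-anti-invariant of square $-2$, and I would use that it is represented by a real Lagrangian sphere; performing the surgery of Theorem \ref{thm:main} along it trades the conjugate pair for two real exceptional divisors, thereby relating $W_{[\CP^2_{p,q}]}(d;s)$ to invariants of $\CP^2_{p+2,q-1}$. Iterating this $q$ times removes all conjugate-pair blow-ups, and the number $q$ furnishes a strictly decreasing complexity measure guaranteeing termination; the same device lowers the conjugate-pair count on the other minimal models.

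The main obstacle I anticipate is not the bookkeeping of the invariants, which is finite and linear, but rather ensuring that at each stage the real Lagrangian sphere needed to apply Theorem \ref{thm:main} genuinely exists in the relevant deformation class, and that the three operations suffice to connect each minimal model $QE$, $CBN$, $DP2$, $DP1$ to $\CP^2$ or $QH$ while strictly decreasing some complexity (for instance the number of conjugate-pair blow-ups together with the topology of the real part). This is precisely the content of the classification recalled before the statement, so the crux of a self-contained argument is to verify, case by case over the list produced by \cite{DegKha02}, that each minimal model admits such a complexity-lowering surgery; once this is granted, the freedom provided by Theorem \ref{thm:main0} to choose representatives makes the induction terminate.
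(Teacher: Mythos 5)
Your overall route is the paper's: invoke the classification of real rational surfaces up to deformation, real blow-up and surgery along real Lagrangian spheres, and reduce blow-ups at conjugate pairs to blow-ups at real points via the observation that the two surfaces obtained by blowing up two real points, respectively a conjugate pair, differ by a surgery along a sphere in the class $[E_1]-[E_2]$; iterating this kills $q$ and Theorem \ref{thm:main} handles each surgery. These are exactly the ingredients of the paper's proof, together with the identification of $QH$ blown up at $l\ge 1$ real points with $\CP^2$ blown up at $l+1$ real points, which you need but never state.

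There is, however, a genuine gap in your induction scheme, precisely at the step you dismiss. You induct on the length of an \emph{arbitrary} sequence of the three operations joining $X_\R$ to $\CP^2$ or $QH$, so an inductive step may be a real blow-up $X_\R\to X'_\R$ where $X'_\R$ is an intermediate surface that is \emph{not} in the base family --- for instance $X'_\R = DP2$, the minimal degree $2$ del Pezzo with real part four spheres, which the classification reaches from $\CP^2$ through deformations and surgeries only. In that situation, knowing that $W_{[X'_\R]}$ is computable from the base gives you nothing about $W_{[X_\R]}$: the paper's toolkit contains no blow-up formula for Welschinger invariants (Corollary \ref{cor:wel} runs in the opposite direction and only involves classes of the form $d-2E$), and invariants of a blow-up are genuinely additional data --- this is exactly why the whole family $\CP^2_{p,0}$, and not just $\CP^2$, must be retained as base. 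Your claim that such an $X_\R$ ``either remains in the family $\CP^2_{p,0}$, or is homologically equivalent to a blow-up of $QH$'' is only valid when $X'_\R$ itself is $\CP^2_{p,0}$ or $QH$. The paper avoids this trap by never treating blow-ups as reduction steps: Theorem \ref{thm:main0} first replaces $X_\R$ by a standard representative of its homological equivalence class (a blow-up of a minimal model at $p$ real points and $q$ conjugate pairs), and then one exhibits, for each such surface, a chain consisting only of deformations and surgeries ending at some $\CP^2_{p',0}$ or at $QH$, all surgeries going in the direction $\chi(\R Y)=\chi(\R X)+2$ in which Theorem \ref{thm:main} applies. Your closing paragraph gestures towards this restructuring, but as written the inductive step fails; relatedly, you never address what happens if a surgery in your sequence goes in the direction opposite to that of Theorem \ref{thm:main}, which would force you to invert the (triangular, hence invertible, but this needs saying) linear relation.

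A smaller slip: in $\CP^2_{p,q}$ the class $[F_j]-[\overline F_j]$ is $\tau$-\emph{invariant}, not anti-invariant, since $\tau_*[F_j]=-[\overline F_j]$; the anti-invariant class is $[E_1]-[E_2]$ on the other side of the surgery, in $\CP^2_{p+2,q-1}$, and that is where the Lagrangian sphere of Theorem \ref{thm:main} must live, as the theorem requires $[S]\in H_2^{-\tau_X}(X;\Z)$ and $\chi(\R Y)=\chi(\R X)+2$. Your reduction does run in the correct direction (expressing $W_{[\CP^2_{p,q}]}$ through $W_{[\CP^2_{p+2,q-1}]}$), so this does not break the argument, but the asymmetry of Theorem \ref{thm:main} is exactly why the bookkeeping above matters.
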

Since all Welschinger invariants of $QH$ and  $CP^2_{p,0}$  have been
computed, see for example \cite{Mik1,Br6b,HorSol12,Che18}, Theorem \ref{thm:all W} completes
the computation of Welschinger invariants $W_{[X_\R]}(d;s)$ of
 real
rational algebraic surfaces.

Next statement can be seen as an increasingness property of
$W_{[X_\R]}$ with respect to $\chi(\R X_\R)$, and goes in a somewhat different
direction than \cite[Theorem 3.4]{Br20}, \cite[Proposition 2.8]{BP14},
and \cite[Corollaries 4.4 and 6.10]{Bru14}.
\begin{thm}\label{thm:increase}
  Let  $X_\R$ and $Y_\R$ be two compact real rational algebraic
  surfaces with a disconnected real part, differing by 
  a surgery  along a real Lagrangian sphere, and such that
  $\chi(\R Y)=\chi(\R X)+ 2$.
  Then for any  $d\in H^{-\tau_Y}_2(Y;\Z)$ and $s\in\Z_{\ge 0}$ such that
  $c_1(X)\cdot d -1 -2s>0$, one has
 \[
  |W_{[Y_\R]}(d;s)| \ge   |W_{[X_\R]}(d;s)|.
\]
\end{thm}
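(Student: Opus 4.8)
The plan is to derive Theorem \ref{thm:increase} as a consequence of Theorem \ref{thm:main} together with the sign and sharpness information provided by Theorem \ref{thm:sign}. The key observation is that the two surfaces $X_\R$ and $Y_\R$ differ by a single surgery along a real Lagrangian sphere, and the hypothesis $\chi(\R Y)=\chi(\R X)+2$ fixes the ``direction'' of this surgery (the one that increases the Euler characteristic of the real part). Theorem \ref{thm:main} should express the difference (or relation) between $W_{[Y_\R]}(d;s)$ and $W_{[X_\R]}(d;s)$ in terms of some correction term built out of Welschinger invariants in classes related to $d$ via the vanishing cycle of the surgery. So the first step is to write down precisely what Theorem \ref{thm:main} gives in this situation, identifying the auxiliary invariant(s) appearing in the formula and the class(es) in which they are counted.

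Second, I would invoke Theorem \ref{thm:sign}, which applies because both surfaces have disconnected real part and the dimension hypothesis $c_1(X)\cdot d-1-2s>0$ holds. Theorem \ref{thm:sign} tells us that, after multiplication by the universal sign $(-1)^{(d^2-c_1(X)\cdot d+1)/2}$, the invariants $W_{[X_\R]}(d;s)$ and $W_{[Y_\R]}(d;s)$ are nonnegative, and moreover that $|W_{[X_\R]}(d;s)|$ equals an actual count of real curves for a suitable representative and configuration. The strategy is then to show that the surgery relation from Theorem \ref{thm:main}, once the signs are normalized consistently, becomes a relation among nonnegative integers in which $|W_{[Y_\R]}(d;s)|$ appears as $|W_{[X_\R]}(d;s)|$ plus a nonnegative correction term. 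The inequality would then follow immediately from nonnegativity of that term.

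The main obstacle I anticipate is matching up the signs: I must check that the universal sign $(-1)^{(d^2-c_1(X)\cdot d+1)/2}$ attached to $W_{[X_\R]}(d;s)$ agrees with the one attached to $W_{[Y_\R]}(d;s)$, so that both invariants have the \emph{same} sign and the surgery formula genuinely reads as an addition of magnitudes rather than a cancellation. Since $X$ and $Y$ are deformation-equivalent as complex surfaces away from the surgery locus, the quantities $d^2$ and $c_1\cdot d$ are preserved under the identification of homology groups, so the two universal signs should coincide; verifying this carefully, and checking that the correction term in Theorem \ref{thm:main} is itself nonnegative after the same sign normalization (again via Theorem \ref{thm:sign} applied to the intermediate count), is the crux. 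A secondary point to confirm is that the hypotheses of Theorem \ref{thm:sign} persist for the auxiliary invariant appearing in the surgery formula, i.e. that the relevant dimension and disconnectedness conditions are inherited.

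Once these compatibility checks are in place, the argument closes quickly: Theorem \ref{thm:main} yields $|W_{[Y_\R]}(d;s)| = |W_{[X_\R]}(d;s)| + (\text{nonnegative term})$, whence $|W_{[Y_\R]}(d;s)| \ge |W_{[X_\R]}(d;s)|$, which is exactly the assertion of Theorem \ref{thm:increase}.
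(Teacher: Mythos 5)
Your proposal is correct and follows essentially the same route as the paper: the paper's proof applies Theorem \ref{thm:main} to write $W_{[Y_\R]}(d;s)=\sum_{k\in\Z}(-1)^k\,W_{[X_\R]}(d-k[S];s)$ and then uses Theorem \ref{thm:sign} to see that all summands share one sign, so the $k=0$ term is dominated in absolute value by the whole sum. The sign check you rightly single out as the crux is settled exactly as you anticipate: since $[S]^2=-2$, $c_1(X)\cdot[S]=0$, and $d\cdot[S]=0$, the universal sign attached to $d-k[S]$ equals $(-1)^k$ times the one attached to $d$, so each $(-1)^k\,W_{[X_\R]}(d-k[S];s)$ has the same sign as $W_{[X_\R]}(d;s)$, and the dimension hypothesis for these auxiliary classes is inherited because $c_1(X)\cdot(d-k[S])=c_1(X)\cdot d$.
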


Theorem \ref{thm:main} is obtained thanks to a real version of a (very
simple instance) of the symplectic sum formula from
\cite{IP,LiRu01,TehZin14},
see also\cite{Li02,Li04}
for an analogous formula in the complex algebraic category.
It turns out that the same strategy provides
a formula similar to  Theorem
\ref{thm:main} for
relative Gromov-Witten invariants of symplectic
4-manifolds.
This observation suggests a possible connection of our work to
tropical refined invariants defined in \cite{BlGo14,GotSch16}.
We discuss this aspect in Section \ref{sec:refined}. In particular, we
provide there
an alternative explanation for the specializations in $q=\pm 1$ of the 
tropical refined descendant invariants from \cite{GotSch16}. We also show
that
a refined version of a conjecture by Itenberg, Kharlamov and Shustin
\cite[Conjecture 6]{IKS2} holds, although it was known to be wrong in
the non-refined case.

\medskip
The remaining part of the
paper is organised as follows. We introduce surgeries along real
Lagrangian spheres in Section \ref{sec:surgery},  then state and
prove Theorem \ref{thm:main}.
All statements given
in the present section can easily be derived  from Theorem \ref{thm:main} and
previously known results. Proofs are
given in 
 Section \ref{sec:proof}.
Finally, we
discuss in Section \ref{sec:refined} connections
of our work to
tropical
refined invariants
of algebraic surfaces and refined Severi degrees.

\medskip
\noindent{\bf Acknowledgment.}  This  work is
partially supported by the grant TROPICOUNT of Région Pays de la
Loire, and the ANR project ENUMGEOM NR-18-CE40-0009-02.
I thank Arthur Renaudineau for his comments on the first version of
this paper.

\section{Surgery along a real Lagrangian sphere}\label{sec:surgery}
Let $X_\R=(X,\omega_X,\tau_X)$ be a real compact symplectic manifold of
dimension 4, and let
 $S\subset X$ be a Lagrangian sphere
 globally invariant under $\tau_X$.
 Locally, a neighborhood $V$ of $S$
 is given by a neighborhood in
  the real affine quadric $(Q,\omega_Q, \tau)$ in $\C^3$ given
by the equation
$$(-1)^{\epsilon_1} x^2+(-1)^{\epsilon_2} y^2+(-1)^{\epsilon_3} z^2=1
\qquad \mbox{with }\epsilon_i\in\{0,1\},$$
 of the sphere $S_Q$ in
$i^{\epsilon_1}\R\times i^{\epsilon_2}\R \times i^{\epsilon_3}\R$
with equation
$$x^2+y^2+z^2=1.$$
As explained in \cite[Section 2.2]{Bru16},
one can modify the symplectic and real structure of  $X_\R$ in $V$ so
that $V$ is now given 
by
a neighborhood of $S_Q$ 
in the
real affine quadric  in $\C^3$ with equation
$$(-1)^{\epsilon_1} x^2+(-1)^{\epsilon_2} y^2+(-1)^{\epsilon_3} z^2=-1 .$$  
The resulting real symplectic manifold 
$Y_\R$ is called a \emph{surgery of $X_\R$ along $S$}  (see Figure
\ref{fig:degen} for a local picture). 
\begin{figure}[h!]
\begin{center}
\begin{tabular}{ccc}
\includegraphics[width=2cm, angle=0]{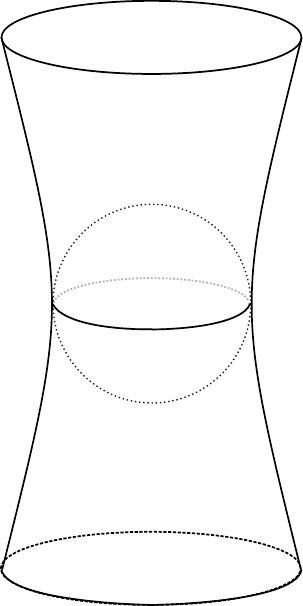}
\hspace{3ex} 
\hspace{3ex} 
\includegraphics[width=2cm, angle=0]{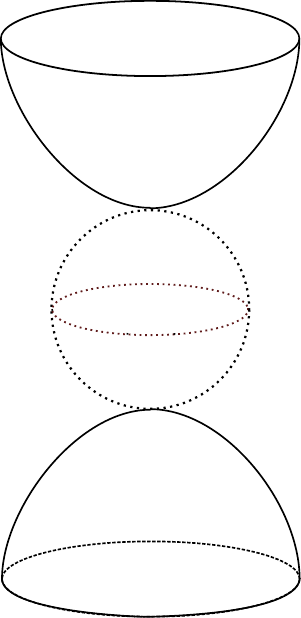}
\put(-135,-15){$\R Q_{1}$}
\put(-135,85){$S_{Q_1}$}
\put(-35,-15){$\R Q_{-1}$}
\put(-5,55){$S_{Q_{-1}}$}
& \hspace{5ex}
&\includegraphics[width=2cm, angle=0]{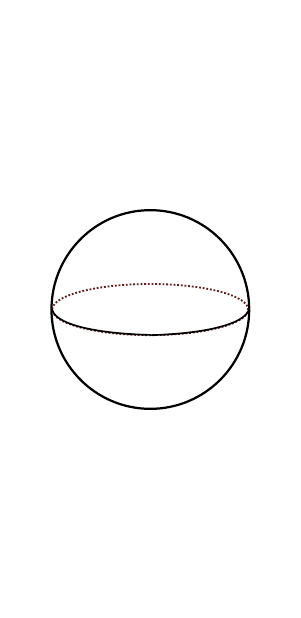}
\hspace{3ex} 
\hspace{1ex} 
\includegraphics[width=2cm, angle=0]{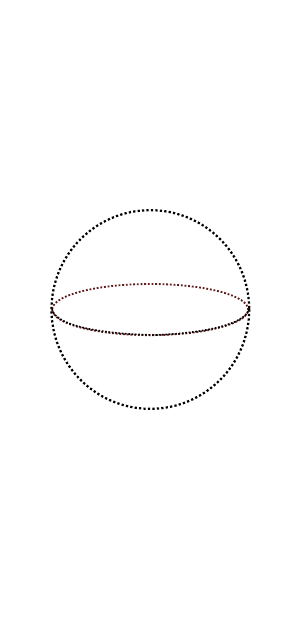}
\put(-135,-15){$\R Q_{1}=S_{Q_1}$}
\put(-50,-15){$\R Q_{-1}=\emptyset$}
\put(-5,55){$S_{Q_{-1}}$}
\\ \\a) $Q_t$ with equation $x^2+ y^2- z^2=t$
&&b) $Q_t$ with equation $x^2+ y^2+ z^2=t$
\end{tabular}
\end{center}
\caption{Surgery of a real symplectic 4-manifold along a real Lagrangian sphere.}
\label{fig:degen}
\end{figure}
Note that, with the convention that $\chi(\emptyset)=0$, we have
$$\chi(\R Y)=\chi(\R X)\pm 2.$$
Furthermore  the class $[S]$ in $H_2(X;\Z)$ is $\tau_X$-anti-invariant if and
only if it is $\tau_Y$-invariant (in which case we have 
$\chi(\R Y)=\chi(\R X)+ 2$).

\medskip
Suppose that  $\chi(\R Y)=\chi(\R X)+ 2$, and let $\rho$ be a vector
whose
entries are indexed by  connected components of $\R Y$. If
$S\subset \R Y$, we further assume that the entry of $\rho$
corresponding to $S$ vanishes. 
We denote by $S^{\tau_X}$ (resp. $S^{\tau_Y}$)
the fixed point set of the involution
$\tau_X|_S$ (resp.  $\tau_Y|_S$). In
particular, we have either $S^{\tau_X}=S^1$ and $S^{\tau_Y}$ consists
in 2 points, or $S^{\tau_X}=\emptyset$ and $S^{\tau_Y}=S$.
The connected components of $\R Y\setminus S^{\tau_Y}$
are canonically  in one-to-one correspondence with the connected
components of $\R X\setminus S^{\tau_X}$. Hence  one can associate to
$\rho$ 
a vector $\widetilde \rho$ whose entries are indexed by
the connected components of $\R X$: the entry corresponding to
the connected component
$L$ of $\R X$ is the sum of the entries corresponding to
the connected component of
$\R Y\setminus S^{\tau_Y}$ corresponding to
$L\setminus  S^{\tau_X}$.

Next theorem  partially generalizes both \cite[Corollary 4.2]{IKS13} and
\cite[Theorem 1.1, Remark 1.3]{Bru16}. 

\begin{thm}\label{thm:main} 
Let  $X_\R$ be a compact real symplectic manifold of dimension 4. 
Let $S$ be a real Lagrangian sphere in $X_\R$, and
 $Y_\R$ a  surgery of $X_\R$ along
$S$. Suppose that  $\chi(\R Y)=\chi(\R X)+ 2$.
Then  for any class 
$d\in H^{-\tau_Y}_2(X;\Z)$,
the  following identity holds
\[
  W_{Y_\R,\rho}(d;s) = \sum_{k\in \Z}\ (-1)^k\ W_{X_\R,\widetilde \rho}(d-k[S];s),
  \]
  whenever the entry of $\rho$ corresponding to $S$ vanishes if
  $S\subset \R Y$.
\end{thm}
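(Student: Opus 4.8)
The plan is to realize the surgery along $S$ as a single symplectic sum (gluing) degeneration, and then to apply a real version of the symplectic sum formula to read off $W_{Y_\R,\rho}(d;s)$ in terms of relative invariants that reassemble into the stated alternating sum. Concretely, I would degenerate $X_\R$ (and simultaneously $Y_\R$, since they share the same complement of the neighborhood $V$ of $S$) into a normal crossing configuration $Z \cup_E Q$, where $Z$ is the complement of $V$ equipped with its induced real divisor $E$, and $Q$ is the relevant model real quadric of Section \ref{sec:surgery} containing the vanishing sphere. The key point is that both $X_\R$ and $Y_\R$ are recovered as the two distinct smoothings of this central fiber, distinguished exactly by the sign choice $+1$ versus $-1$ in the defining equation of the quadric. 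This is the geometric content encoding why the two manifolds differ by the surgery, and it is what lets a single degeneration argument compare them.

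Next I would set up the count $\CC_{Y_\R}(d,\x,J)$ for a nearly degenerate $\tau_Y$-compatible almost complex structure $J$, choosing the point configuration $\x$ so that all real points lie in the part of $\R Y$ away from $S^{\tau_Y}$ (which is legitimate precisely because the entry of $\rho$ corresponding to $S$ vanishes when $S\subset \R Y$). As $J$ degenerates, each rational curve through $\x$ in class $d$ breaks into a component in $Z$ and components in $Q$, meeting along $E$ with prescribed tangency/intersection data. The integer $k$ indexing the sum should emerge as the intersection multiplicity data of the limit curve with $E$, equivalently the $[S]$-degree absorbed into the quadric piece; the class on the $Z$-side is then $d-k[S]$, so summing over $k$ is summing over the combinatorial splitting types. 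The partition $\widetilde\rho$ appears because the canonical identification of connected components of $\R X\setminus S^{\tau_X}$ with those of $\R Y\setminus S^{\tau_Y}$ dictates how the real point conditions on the $Y$-side transfer to real point conditions on the $X$-side. Here I would invoke \cite[Theorem 2.5(1)]{BP14} to guarantee that the relevant relative count has the required invariance and gluing behavior, mirroring the argument of \cite[Corollary 4.2]{IKS13}.

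The sign $(-1)^k$ is the delicate bookkeeping step, and I expect the main obstacle to lie there. On the symplectic sum, the Welschinger sign $(-1)^{m(C)}$ of a smooth curve must be compared with the product of Welschinger-type signs of its limit components, and the discrepancy is controlled by how nodes with conjugate branches are created or destroyed in the gluing across $E$. I would track, for each gluing node along the real locus of $E$, whether it contributes a pair of $\tau$-conjugate branches, and show that the net contribution over a splitting of type $k$ is exactly $(-1)^k$ relative to the $X_\R$-side sign $m(C)$. This is the point where the proof is, as the authors put it, a mild adaptation: once one trusts the final formula, the sign calculation is forced, but verifying it from first principles requires a careful orientation and multiplicity analysis of the model quadric, analogous to the computations underlying \cite[Theorem 1.1, Remark 1.3]{Bru16}.

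Finally, I would confirm that the sum over $k\in\Z$ is effectively finite: positivity of $\omega_X$ bounds the number of components mapping into $Q$, so $W_{X_\R,\widetilde\rho}(d-k[S];s)$ vanishes for all but finitely many $k$ (this also uses Remark \ref{rem:vanish}, since $c_1(X)\cdot(d-k[S])$ eventually becomes non-positive). Assembling the degeneration, the splitting-type decomposition, and the sign analysis then yields the identity $W_{Y_\R,\rho}(d;s)=\sum_{k\in\Z}(-1)^k\,W_{X_\R,\widetilde\rho}(d-k[S];s)$, completing the proof.
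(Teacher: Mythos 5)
Your geometric picture---a common degeneration whose two real smoothings are $X_\R$ and $Y_\R$, with curves breaking into a part in the complement of the vanishing sphere and parts in the model quadric---is indeed the picture behind the paper's proof, and the formula is true. But there is a genuine gap at precisely the step you flag as ``delicate'': you assert that the net contribution of the splitting type $k$ to the degenerated count for $Y_\R$ equals $(-1)^k$ times the \emph{absolute} invariant $W_{X_\R,\widetilde\rho}(d-k[S];s)$, to be established by a node-by-node sign analysis along the gluing divisor. This cannot work as stated, for two reasons. First, the splitting-type contributions are naturally expressed through counts \emph{relative} to $S$ in an intermediate manifold, not through invariants of $X_\R$: the class-$(d-k[S])$ curves in $X_\R$ themselves break up under the same degeneration, so relating the relative counts back to $W_{X_\R,\widetilde\rho}(d-k[S];s)$ requires a second application of the degeneration formula, not just sign bookkeeping. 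Second, the true per-type multiplicities are not $\pm 1$. The paper's proof deforms $X_\R$ to a real symplectic manifold $Z_\R$ in which $S$ becomes symplectic and $J$-holomorphic, lets $W^\beta_{Z_\R,\widetilde\rho}(d-k[S];s)$ denote the count of real rational curves meeting $S\setminus\R S$ in exactly $\beta$ conjugate pairs, and then applies \cite[Theorem 2.5(1)]{BP14} \emph{twice}:
\[
W_{Y_\R,\rho}(d;s)=\sum_{l\ge 0}(-2)^l\,W^l_{Z_\R,\widetilde\rho}(d-l[S];s),
\qquad
W_{X_\R,\widetilde\rho}(d;s)=\sum_{j,b,\beta\ge 0}\binom{d\cdot[S]+2j-2b}{j-2\beta}\binom{b}{\beta}\,W^b_{Z_\R,\widetilde\rho}(d-j[S];s).
\]
The theorem then reduces to eliminating the relative counts between these two expansions, i.e.\ to a nontrivial alternating binomial identity (this computation occupies most of the paper's proof). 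Note that the $Y$-side multiplicities are $(-2)^l$, not $(-1)^k$: the alternating sign in Theorem \ref{thm:main} only emerges after cancellation between the two expansions and is not visible splitting-by-splitting, so the ``forced'' sign computation you describe has no curve-level meaning to verify. Your proposal is missing this entire layer, which is the actual content of the proof.

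A second, smaller but real, error is your finiteness argument. Since $S$ is a Lagrangian sphere, adjunction gives $[S]^2=-2$ and $c_1(X)\cdot[S]=0$, so $c_1(X)\cdot(d-k[S])=c_1(X)\cdot d$ for every $k$ and never becomes non-positive; likewise $\omega_X([S])=0$, so symplectic area in $X$ does not bound $k$ either. Finiteness instead follows, e.g., from Gromov compactness for a fixed generic $J$ (only finitely many classes of fixed energy are $J$-represented), or by passing to the deformation $Z_\R$, where $S$ is symplectic and hence $\omega_Z(d-k[S])<0$ for $k$ large---combined with the symmetry $W_{X_\R,\widetilde\rho}(d-k[S];s)=W_{X_\R,\widetilde\rho}(d+k[S];s)$ coming from the Dehn twist along $S$ (\cite[Remark 1.3]{Bru16}), which is also the first step of the paper's proof, used to fold the sum over $k\in\Z$ into a sum over $k\ge 0$; your write-up does not address this folding either.
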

\begin{proof}
  Note first that the identity we want to prove is independent on the
  orientation chosen on $S$ to define an element $[S]\in H_2(X;\Z)$.
 By \cite[Remark 1.3]{Bru16},   this identity
   can be rewritten in the following form once such orientation is chosen:
  \begin{equation}\label{eq:proof}
  W_{Y_\R,\rho}(d;s) = W_{X_\R,\widetilde \rho}(d;s) + 2\sum_{k\ge
      1}\ (-1)^k\
    W_{X_\R,\widetilde \rho}(d-k[S];s).
  \end{equation}
  Recall (see \cite[Section 2.2]{Bru16}) that $X_\R$ can be deformed to 
a real symplectic 4-manifold $(Z,\omega_Z,\tau_Z)$ for which $S$,
equipped with the chosen orientation, 
becomes symplectic.
Choose a real configuration of points $\x$ in $Z$ with $s$ pairs of
$\tau_Z$-conjugated points, and $\rho_L$ points in $L$ for
each connected component $L$ of $\R Z\setminus \R S$. Choose also a
$\tau_Z$-compatible almost complex structure $J$ on $Z$ for which $S$
is $J$-holomorphic.
Given an integer $k\ge 0$, we denote 
 by $\mathcal C^\beta(d-k[S],\underline x,J)$
the set of all irreducible rational real $J$-holomorphic curves in
$(Z,\omega_Z,\tau_Z)$ 
  passing through all points in $\x$, realizing the class
  $d-k[S]$, and intersecting $S\setminus \R S$ in exactly $\beta$ pairs of
  $\tau_Z$-conjugated points. For a generic choice of $J$ satisfying the above conditions, 
the set $\mathcal C^\beta(d-k[S],\underline x,J)$ is finite and only contains
  nodal curves by \cite[Lemma 3.1 and Proposition 3.3]{BP14}. We define
  \[
  W^\beta_{Z_\R,\widetilde \rho}(d-k[S];s)=\sum_{C\in \mathcal C^\beta(d-k[S],\underline x,J)}(-1)^{m(C)}.
  \]
Note that $W^\beta_{Z_\R,\widetilde \rho}(d-k[S];s)$ may depend on
the choices of $\x$ and $J$, nevertheless we will not record this
dependence in our notation in order to lighten the exposition.
  By \cite[Theorem 2.5(1)]{BP14}, we have
 \[
  W_{Y_\R,\rho}(d;s) = \sum_{l\ge 0}(-2)^l \ W^l_{Z_\R,\widetilde \rho}(d-l[S];s),
  \]
  and
   \[
   W_{X_\R,\widetilde \rho}(d;s) =  \sum_{j,b,\beta\ge
     0}
   \left(\begin{array}{c}d\cdot[S]+2j-2b \\ j-2\beta\end{array}\right)
     \left(\begin{array}{c}b \\ \beta\end{array}\right)
  W^b_{Z_\R,\widetilde\rho}(d-j[S];s).
  \]
Denoting by $A$ the right-hand-side of
$(\ref{eq:proof})$, and using the last identity, we obtain
\begin{align*}
A &=& &\sum_{j,b,\beta\ge
     0}
   \left(\begin{array}{c}2j-2b \\ j-2\beta\end{array}\right)
     \left(\begin{array}{c}b \\ \beta\end{array}\right)
       W^b_{Z_\R,\widetilde \rho}(d-j[S];s)
       \\
       \\ & && +2 \sum_{k\ge 1}\ (-1)^k
      \sum_{j,b,\beta\ge 0}
   \left(\begin{array}{c}2k+2j-2b \\ j-2\beta\end{array}\right)
     \left(\begin{array}{c}b \\ \beta\end{array}\right)
       W^b_{Z_\R,\widetilde \rho}(d-(j+k)[S];s).       
\end{align*}
By the change of variable $j=l-k$,
we obtain
\begin{align*}
A &=& &\sum_{l,b,\beta\ge
     0}
   \left(\begin{array}{c}2l-2b \\ l-2\beta\end{array}\right)
     \left(\begin{array}{c}b \\ \beta\end{array}\right)
       W^b_{Z_\R,\widetilde \rho}(d-l[S];s)
       \\
       \\ & && +2\sum_{k,l\ge 1}
      \sum_{b,\beta\ge 0} (-1)^k
   \left(\begin{array}{c}2l-2b \\l-k-2\beta\end{array}\right)
     \left(\begin{array}{c}b \\ \beta\end{array}\right)
       W^b_{Z_\R,\widetilde\rho}(d-l[S];s).
\end{align*}
Hence the coefficient of
$W^b_{Z_\R,\widetilde \rho}(d-l[S];s)$ in $A$ is
\[
\sum_{\beta\ge
     0}
   \left(\begin{array}{c}2l-2b \\ l-2\beta\end{array}\right)
     \left(\begin{array}{c}b \\ \beta\end{array}\right) +
       2\sum_{k\ge 1}
      \sum_{\beta\ge 0} (-1)^k
   \left(\begin{array}{c}2l-2b \\ l-k-2\beta\end{array}\right)
     \left(\begin{array}{c}b \\ \beta\end{array}\right),
\]
that is to say
\[
\sum_{\beta\ge
     0}\left(
   \left(\begin{array}{c}2l-2b \\ l-2\beta\end{array}\right)
     +  2\sum_{k\ge 1}
      (-1)^k
   \left(\begin{array}{c}2l-2b \\ l-k-2\beta\end{array}\right)       \right)
     \left(\begin{array}{c}b \\ \beta\end{array}\right).
\]
We denote by $u_{l,b,\beta}$ the coefficient of
$\left(\begin{array}{c}b \\ \beta\end{array}\right)$ in the latter
  sum. We have
  \begin{align*}
 u_{l,b,b-\beta} & = \left(\begin{array}{c}2l-2b \\ l-2\beta\end{array}\right)
     +  2\sum_{k\ge 1}
      (-1)^k
   \left(\begin{array}{c}2l-2b \\ l+k-2\beta\end{array}\right).
  \end{align*}
  Hence we get
  \[
  u_{l,b,\beta} + u_{l,b,b-\beta} =2\times (-1)^{l}\  \sum_{p\ge 0}^{2l-2b}
      (-1)^p   \left(\begin{array}{c}2l-2b \\ p\end{array}\right),
 \]
 and so
 \[
 u_{l,b,\beta} + u_{l,b,b-\beta} = 0
 \]
 if $l>b$, and
  \[
 u_{l,l,\beta} + u_{l,l,l-\beta} = 2 \times (-1)^l.
 \]
 This implies that
 \[
 \sum_{\beta\ge
     0} u_{l,b,\beta}
     \left(\begin{array}{c}b \\ \beta\end{array}\right)=0
 \]
 if $l>b$, and
 \[
 \sum_{\beta\ge
     0} u_{l,l,\beta}
     \left(\begin{array}{c}l \\ \beta\end{array}\right)=(-2)^l,
       \]
       which is precisely what we have to prove.
\end{proof}

 \begin{rem}
   Theorem \ref{thm:main} can clearly be generalised to
   modified Welschinger invariants introduced in
   \cite{IKS11}, at the cost of much heavier notations. It may be
   nevertheless interesting to work out such generalisation.
 \end{rem}

As an example of application of Theorem \ref{thm:main}, we have the following.
\begin{prop}\label{prop:wel}
  Let  $X_\R$ be a compact real symplectic manifold of dimension
  4. Let $\widetilde X_\R$ be the blow up of $X_\R$ at two disjoint
  real balls, and $\widetilde Y_\R$ be the blow up of $X_\R$ at two 
  $\tau_X$-conjugated disjoint balls. In both cases we denote by $E_1$
  and $E_2$ the two exceptional divisors.
  Then for any class
  $d\in  H^{-\tau_X}(X;\Z)$ and any $l\in\Z_{\ge 0}$, one has
  \[
  W_{\widetilde Y_\R,\rho}(d-l [E_1] - l[E_2];s)=W_{\widetilde X_\R,\rho}(d-l [E_1] - l[E_2];s)
  +2\sum_{\lambda=1}^l(-1)^\lambda\ W_{\widetilde
    X_\R,\rho}(d-(l-\lambda)[E_1] - (l+\lambda)[E_2] ;s). 
  \]
\end{prop}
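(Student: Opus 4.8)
The plan is to apply Theorem \ref{thm:main} in the specific geometric situation where the surgery is performed along a real Lagrangian sphere naturally arising from the two exceptional divisors. The key observation is that blowing up $X_\R$ at two disjoint real balls versus two $\tau_X$-conjugated disjoint balls produces two real symplectic manifolds $\widetilde X_\R$ and $\widetilde Y_\R$ that differ precisely by a surgery along a real Lagrangian sphere $S$ whose class satisfies $[S]=[E_1]-[E_2]$ (up to orientation). Indeed, in the real blow-up picture, the exceptional divisors $E_1,E_2$ over two real balls each contribute an $\R P^1$ to the real part, and the vanishing cycle connecting them is the Lagrangian sphere; performing the surgery exchanges the two real balls for a conjugate pair, increasing $\chi$ by $2$ and sending $[E_1]-[E_2]$ from $\tau$-anti-invariant to $\tau$-invariant. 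So first I would verify carefully that $\widetilde Y_\R$ is obtained from $\widetilde X_\R$ by surgery along $S$ with $[S]=[E_1]-[E_2]$ and that $\chi(\R\widetilde Y)=\chi(\R\widetilde X)+2$, which is the hypothesis of Theorem \ref{thm:main}.

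Next I would simply substitute into the formula of Theorem \ref{thm:main}, in its expanded form \eqref{eq:proof}. Taking the class to be $d-l[E_1]-l[E_2]$ and $[S]=[E_1]-[E_2]$, the summand indexed by $k$ contributes the class
\[
(d-l[E_1]-l[E_2]) - k([E_1]-[E_2]) = d-(l+k)[E_1]-(l-k)[E_2].
\]
Reindexing by $\lambda=k$ (and using that the relevant moduli spaces, hence the invariants, vanish for the terms that would push the $E_2$-coefficient outside the allowed range, so that the sum effectively truncates at $\lambda=l$ exactly as on the right-hand side of the Proposition) yields precisely
\[
W_{\widetilde X_\R,\rho}(d-l[E_1]-l[E_2];s) + 2\sum_{\lambda=1}^{l}(-1)^\lambda\, W_{\widetilde X_\R,\rho}(d-(l-\lambda)[E_1]-(l+\lambda)[E_2];s).
\]
This is a direct translation of the abstract surgery formula into the blow-up coordinates, with the sign $(-1)^\lambda$ and the factor $2$ carried over verbatim from \eqref{eq:proof}.

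The main obstacle, I expect, is not the algebraic substitution but rather the geometric identification in the first step: one must check that the two blow-up configurations genuinely sit in a single surgery family, that the Lagrangian sphere $S$ is real and globally $\tau$-invariant, and that its homology class is correctly identified as $[E_1]-[E_2]$ with the right orientation so that the signs match. I would also need to confirm that $\rho$ (whose entries are indexed by components of the real part) transforms correctly under the correspondence $\rho\leftrightarrow\widetilde\rho$ described before Theorem \ref{thm:main}: here the two exceptional $\R P^1$'s are the components affected by the surgery, and because the surgery merges them appropriately, the partition $\rho$ on $\widetilde Y_\R$ pulls back to the same $\rho$ on $\widetilde X_\R$ on all unaffected components. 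Once these identifications are in place, the Proposition follows immediately from Theorem \ref{thm:main} with essentially no further computation.
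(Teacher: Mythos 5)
Your proposal is correct and takes exactly the paper's route: the paper's entire proof is the single line ``This is Theorem \ref{thm:main} applied to the class $[E_1]-[E_2]$,'' and the details you supply (identifying $\widetilde Y_\R$ as the surgery of $\widetilde X_\R$ along a real Lagrangian sphere in class $\pm([E_1]-[E_2])$ with $\chi(\R \widetilde Y)=\chi(\R \widetilde X)+2$, checking the $\tau$-(anti-)invariance, and truncating the sum at $\lambda=l$ because classes with positive coefficient on an exceptional divisor meet it negatively and so carry no irreducible curves) are precisely what the paper leaves implicit. The only cosmetic point: with your orientation $[S]=[E_1]-[E_2]$ the classes come out as $d-(l+k)[E_1]-(l-k)[E_2]$, which matches the stated formula only after exchanging the roles of $E_1$ and $E_2$ (equivalently, taking $[S]=[E_2]-[E_1]$); this is harmless since, as you note, the identity is independent of the orientation of $S$.
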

The notation $\rho$ both for $\widetilde X_\R$ and
$\widetilde Y_\R$ in the theorem makes sense since the connected components of
$\R\widetilde X$ and $\R\widetilde Y$ are in a canonical one-to-one
correspondence. 
\begin{proof}
This is Theorem \ref{thm:main} applied to the class $[E_1]-[E_2]$.
\end{proof}

Proposition \ref{prop:wel} applied with $l=1$, combined with
\cite[Theorems 1.1 and 1.2]{DiHu18},  specializes to Welschinger Formula
\cite[Theorem 0.4]{Wel1}.
\begin{cor}\label{cor:wel}
  Let  $X_\R$ be a compact real symplectic manifold of dimension
  4, and let $\widetilde X_\R$ be the blow up of $X_\R$ at one real ball. We denote by $E$ the exceptional divisor.
 Then for any $d\in H^{-\tau_X}(X;\Z)$ and $s\in\Z_{\ge 0}$ such that
  $c_1(X)\cdot  d-2s\ge 3$, one has
  \[
  W_{X_\R,\rho}(d;s+1)=W_{X_\R,\rho}(d;s) - 2W_{\widetilde X_\R,\rho}(d-2E;s).
  \]
\end{cor}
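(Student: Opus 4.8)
The plan is to specialize Proposition \ref{prop:wel} to the case $l=1$ and then to translate each of the three resulting terms back into invariants of $X_\R$ by means of the blow-up formulas of \cite[Theorems 1.1 and 1.2]{DiHu18}. First I would apply Proposition \ref{prop:wel} to the class $d$ (pulled back to the blow-ups), denoting by $\widehat X_\R$ the blow up of $X_\R$ at two disjoint real balls and by $\widehat Y_\R$ the blow up of $X_\R$ at two $\tau_X$-conjugated balls, in both cases with exceptional divisors $E_1$ and $E_2$. For $l=1$ the proposition reads
\[
W_{\widehat Y_\R,\rho}(d-E_1-E_2;s)=W_{\widehat X_\R,\rho}(d-E_1-E_2;s)-2\,W_{\widehat X_\R,\rho}(d-2E_2;s),
\]
so that the whole content of the corollary lies in identifying these three numbers.

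Next I would treat the two terms that carry genuine incidence conditions along the exceptional divisors. On $\widehat Y_\R$ the divisors $E_1,E_2$ form a $\tau_X$-conjugated pair, and imposing that a curve meet this pair is, by the blow-up formula at a pair of conjugated points \cite[Theorem 1.2]{DiHu18}, the same as imposing one extra pair of $\tau_X$-conjugated point constraints; hence the left-hand side equals $W_{X_\R,\rho}(d;s+1)$. Symmetrically, on $\widehat X_\R$ the divisors $E_1,E_2$ are real, and the blow-up formula at a real point \cite[Theorem 1.1]{DiHu18}, applied once for $E_1$ and once for $E_2$, identifies a simple passage through each of them with one real point constraint, whence $W_{\widehat X_\R,\rho}(d-E_1-E_2;s)=W_{X_\R,\rho}(d;s)$. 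Here the blow-up centers are to be chosen in the connected component of $\R X$ recorded by $\rho$, and the fact that these correspondences preserve the Welschinger sign $(-1)^{m(C)}$ is exactly the content of the cited theorems; this is the point where one must use \cite{DiHu18} in its signed form.

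It remains to handle the term $W_{\widehat X_\R,\rho}(d-2E_2;s)$. Since the class $d-2E_2$ has vanishing coefficient along $E_1$, the relevant curves are disjoint from $E_1$, so blowing $E_1$ down identifies this integer with $W_{\widetilde X_\R,\rho}(d-2E;s)$, where $\widetilde X_\R$ is the blow up of $X_\R$ at the single real ball and $E=E_2$ is the exceptional divisor of the statement. Substituting the three identifications into the displayed equality gives precisely $W_{X_\R,\rho}(d;s+1)=W_{X_\R,\rho}(d;s)-2\,W_{\widetilde X_\R,\rho}(d-2E;s)$. I would finish by checking that the hypothesis $c_1(X)\cdot d-2s\ge 3$ is exactly what guarantees that all the invariants above are defined: the binding requirement is that the configuration computing $W_{X_\R,\rho}(d;s+1)$ have a non-negative number of real points, i.e.\ $c_1(X)\cdot d-1-2(s+1)\ge 0$, and the same inequality governs the $d-2E$ term.

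I expect the genuine difficulty to lie not in the algebra — the coefficient $-2$ drops straight out of Proposition \ref{prop:wel} with $l=1$ — but in the bookkeeping of the point configurations across the three blow-ups: matching the number of real points in each connected component, and the value of $s$, on $\widehat X_\R$ and $\widehat Y_\R$ with those prescribed on $X_\R$, while simultaneously checking that the identifications of \cite{DiHu18} are compatible with the sign $(-1)^{m(C)}$. Once \cite[Theorems 1.1 and 1.2]{DiHu18} are read in the form that records $m(C)$, each of these matchings is routine and the corollary follows.
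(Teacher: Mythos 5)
Your proposal is correct and follows exactly the paper's own route: Corollary \ref{cor:wel} is obtained there precisely by specializing Proposition \ref{prop:wel} to $l=1$ and identifying the three resulting invariants via the blow-up formulas of \cite[Theorems 1.1 and 1.2]{DiHu18}. Your fleshing-out of the three identifications (conjugated exceptional pair $\leftrightarrow$ one extra pair of conjugated point constraints, two real exceptional divisors $\leftrightarrow$ two real point constraints, and blowing down $E_1$ for the class $d-2E_2$ which is disjoint from it), together with the dimension count showing that $c_1(X)\cdot d-2s\ge 3$ is what keeps all invariants defined, is exactly the bookkeeping the paper leaves implicit.
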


\section{Proofs of the main results}\label{sec:proof}
As mentioned in Section \ref{sec:main},
a classification of real rational
algebraic surfaces 
 up to  deformation,
real blow-up, and surgery along a real
Lagrangian sphere
is easily obtained combining \cite[Main Theorem and Theorem 2.4.1]{DegKha02}
together with  the classical rigid isotopy classifications
of plane real quartics, of real cubic sections of the quadratic cone
in $\CP^3$, and of real quadrics in $\CP^3$ (see for example
\cite{DK}).
We implicitly use this classification in the following five proofs.

\begin{proof}[Proof of Theorem \ref{thm:main0}]
  The surface $X_\R$ can
  be degenerated 
to 
   a nodal  real algebraic
  rational surface $\overline X_\R$
  with a connected real part and having only real
  nodes.
  Blowing up these
  nodes, we obtain a non-singular real algebraic rational surface
  $Z_\R$ with a connected real part.
  By construction  $X_\R$ is obtained, up to deformation,
  by surgeries along the disjoint union of the exceptional divisors
  of the desingularization $Z_\R\to\overline X_\R$.
  Since $\R Z$ is connected, the Welschinger invariants
  of $Z_\R$ depend neither on the choice of $\x$ nor on
  the position of the real blown-up points on a minimal model of $Z$.
Now   the proposition is  an immediate consequence of
  Theorem \ref{thm:main}.
\end{proof}

\begin{proof}[Proof of Corollary \ref{cor:maincor}]
Up to choosing another representative of $[X_\R]$, we may assume that
$\R E_1$ and $\R E_2$ do not lye on the same connected component of
$\R X$. In particular by connectedness of $\RP^1$,
the set $\mathcal C_{X_\R}(d,\x,J)$
is clearly empty for any configuration $\x$.
\end{proof}

\begin{proof}[Proof of Theorem \ref{thm:sign}]
 Up to choosing  another
  representative of $[X_\R]$,  we may assume that a connected
  component $L$ of $\R X$ is homeomorphic to the sphere $S^2$.
If $c_1(X)\cdot d-1-2s \ge 2$, the result follows from Remark
\ref{rem:vanish}.
If  $c_1(X)\cdot d-1-2s =1$, then we choose the configuration $\x$
such that $\x\cap \R X\subset L$. Now the result follows from
\cite[Theorem 1.1]{Wel4}. 
\end{proof}
\begin{rem}
If  $c_1(X)\cdot d-1-2s =0$, then \cite[Theorem 1.1]{Wel4} states that
one can find $\x$ and $J$ so that there are no curve $C$ in
$\mathcal C_{X_\R}(d,\x,J)$ with $\R C\subset L$. Nevertheless, 
there may exist curves $C$ in $\mathcal C_{X_\R}(d,\x,J)$ with
$\R C\subset \R X\setminus L$.
\end{rem}

\begin{proof}[Proof of Theorem \ref{thm:all W}]
 Any real rational algebraic surface can be obtained
by a finite sequence of deformations,
real blow-ups, and surgery along real
Lagrangian spheres,
starting from either  $\CP^2$ or the quadric hyperboloid $QH$ in $\CP^3$.

Let $Y_{1,\R}$ and $Y_{2,\R}$ be two real algebraic surfaces obtained
by blowing up a real rational surface $Y_\R$ in respectively two real
and $\tau_Y$-conjugated points. Denoting by $E_1$ and $E_2$ the two
exceptional divisors, the real surface  $Y_{2,\R}$ is obtained by a
surgery along a real Lagrangian sphere realizing the class $[E_1]-[E_2]$. Since
$QH$ blown-up in $l\ge 1$ real points is also $\CP^2$ blown-up in $l+1$
real points, the result follows.
\end{proof}

\begin{proof}[Proof of Theorem \ref{thm:increase}]
Let $S$ be a real  Lagrangian sphere allowing to pass from $X_\R$ to
$Y_\R$ by surgery. Since one has $[S]^2=-2$ and $c_1(X)\cdot [S]=0$,
it follows from Theorem \ref{thm:sign} that
$W_{[X_\R]}(d;s)$ and $(-1)^k \ W_{[X_\R]}(d-k[S];s)$ have the same sign.
Now the result follows from Theorem \ref{thm:main}.
\end{proof}

\section{Hypothetical connection to tropical refined invariants}\label{sec:refined}
We end this note by pointing out a possible connection of Theorem
\ref{thm:main} to tropical refined invariants of algebraic surfaces,
and refined Severi degrees.

\subsection{Tropical refined invariants and surgeries along real
  Lagrangian spheres}
A  non-degenerate convex polygon  $\Delta\subset \R^2$ with
vertices in $\Z^2$ defines a complex toric surface $X_\Delta$ together
with a complete linear system $d$.
Block and Göttsche proposed in \cite{BlGo14}
to enumerate  irreducible tropical
curves with Newton polygon $\Delta$ and genus $g$ as proposed in \cite{Mik1},
but replacing Mikhalkin's complex multiplicity with its
quantum analog. 
One obtains in
this way a Laurent polynomial in the variable $q$, called
\emph{tropical refined invariant} and denoted by
$G_{X_\Delta}(d,g)$, which does not depend on the configuration of points
chosen to define it \cite{IteMik13}. 
By \cite{Mik1}, the value $G_{X_\Delta}(d,g)(1)$ recovers the number
of complex
irreducible algebraic curves of genus $g$ in $X_\Delta$ realizing the class $d$,
and passing through a generic configuration of
$c_1(X_\Delta)\cdot d -1+g$ points.
Any complex toric surface has a standard real structure induced by the
standard real structure on $(\C^*)^2$, and we denote by
$X_{\Delta,\R}$ the corresponding real toric surface.
It also follows from
\cite{Mik1} that when $X_{\Delta,\R}$ is a 
real unnodal toric del
Pezzo surface (i.e. $\CP^1\times \CP^1$ or $\CP^2$ blown up in  at 
most 3 real points not contained in a line), one has
\[
G_\Delta(0)(-1)=W_{[X_{\Delta,\R}]}(d;0).
\]
In other words,  tropical refined invariants interpolate between
 Gromov-Witten invariants (for $q=1$) and Welschinger
invariants with $s=0$ (for $q=-1$) of $X_{\Delta,\R}$ when both are defined.  
Tropical refined invariants are conjectured to agree with the
$\chi_y$-refinement of Severi degrees introduced in 
\cite{GotShe12}.

In the case of
 a
real unnodal toric del
Pezzo surface $X_{\Delta,\R}$,
Göttsche and Schroeter defined in \cite{GotSch16} some tropical
refined descendant invariants, denoted by $G_{X_\Delta}(d,0;s)$,
interpolating between
some genus 0 tropical descendent invariants and Welschinger invariants 
$W_{[X_{\Delta,\R}]}(d;s)$, and this  for arbitrary values of $s$.
This work has been then  generalized to all genus 0 tropical descendant
invariants by Blechman and Shustin in \cite{BleShu17}, leaving open
the general  interpretation  of the value at $q=-1$ of these new 
tropical
refined descendant invariants.

Despite recent progress, eg.  \cite{Mik15, NPS16,Bou17}, a general
 enumerative interpretation of tropical
 refined invariants unifying their complex and real aspects
 remains unknown. 

\medskip
Nevertheless,
tropical refined invariants
seem to behave nicely under
degenerations of the ambient algebraic surface (or symplectic
4-manifold). In particular, 
a connection of our results to
tropical refined invariants may be suggested by the following observation:
 relative Gromov-Witten invariants of symplectic
4-manifolds satisfy a formula similar to the one from Theorem
\ref{thm:main}.
To make this precise, we need first to introduce some additional notations.

Let $(X,\omega_X)$ be a compact symplectic manifold of dimension 4,
containing a finite union $U= E_1\cup \ldots\cup E_\kappa$
of pairwise disjoint
embedded symplectic spheres
with $[E_i]^2=-2$.
Let also 
$J$ be  an almost complex structure on $X$ tamed by $\omega_{X}$,
 for which all curves $E_1,\ldots,E_\kappa$ are
 $J$-holomorphic.
Given $d \in H_2(X;\Z)$,
let us choose a configuration $\x$ of  $c_1(X)\cdot d-1$ distinct
points in $X\setminus \displaystyle \bigcup_{i=1}^\kappa E_i$. 
We define 
$GW^U_{(X,\omega)}(d)$ as the number of
irreducible $J$-holomorphic rational curves $f:C\to X$ 
 with  $f_*[C]=d$, passing through all points in
 $\x$, and whose image is not contained in
 $\displaystyle \bigcup_{i=1}^\kappa E_i$.
 For a generic choice of $J$,
 this number is finite and does not depend on $\x$, it is called
 a \emph{Gromov-Witten invariant of $(X,\omega)$ relative to $U$}.
 Given an element $E$ of $U$, we define
 $\widehat U=U\setminus \{E\}$. Given two integer numbers $m$ and $k$,
 we define
 \[
 u_{m,k}=(-1)^{k}\left(
\binom{ m +k}{ m} + 
\binom{ m +k-1}{ m} 
\right).
 \]
 \begin{prop}\label{prop:invABV}
   One has
 \[
 GW^{U}_{(X,\omega)}(d)=\sum_{k\ge 0}
 u_{d\cdot [E],k}\
 GW^{\widehat U}_{(X,\omega)}(d-k[E]).
 \]
 In particular if $d\cdot [E]=0$, then one has
  \[
 GW^{U}_{(X,\omega)}(d)= \sum_{k\in \Z}
 (-1)^{k}
 GW^{\widehat U}_{(X,\omega)}(d-k[E]).
 \]
 \end{prop}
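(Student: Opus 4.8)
The plan is to prove Proposition \ref{prop:invABV} by following verbatim the strategy of Theorem \ref{thm:main}, replacing the real surgery picture by its purely complex counterpart. Since $E$ is an embedded symplectic sphere with $[E]^2=-2$, adjunction gives $c_1(X)\cdot[E]=0$, so $E$ plays exactly the role of the Lagrangian sphere $S$ of Section \ref{sec:surgery}, except that there is no conjugation to keep track of. As in the proof of Theorem \ref{thm:main}, I would first fix a tamed almost complex structure $J$ for which $E$ and all of $\widehat U$ are $J$-holomorphic, choose a generic configuration $\x$ of $c_1(X)\cdot d-1$ points away from $U$, and introduce refined relative invariants: for $j\ge 0$ and $b\ge 0$, let $N^b(d-j[E])$ denote the number of irreducible rational $J$-holomorphic curves in the class $d-j[E]$ passing through $\x$, disjoint from $\widehat U$, and meeting $E$ in exactly $b$ points. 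The complex analog of \cite[Lemma 3.1 and Proposition 3.3]{BP14} should guarantee that, for generic such $J$, these sets are finite and consist only of nodal curves, so that the numbers $N^b(d-j[E])$ are well defined.

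Next I would produce the two expansion formulas that are the complex counterparts of \cite[Theorem 2.5(1)]{BP14}, both obtained from the (very simple instance of the) symplectic sum formula of \cite{IP,LiRu01,TehZin14} applied to the degeneration of $(X,\omega)$ that stretches the neck around $E$. The first would express $GW^{U}_{(X,\omega)}(d)$, which counts curves disjoint from all of $U$ and hence from $E$, as an alternating sum of the form $\sum_{l\ge 0}(-1)^l N^l(d-l[E])$ (the real factor $(-2)^l$ being replaced by $(-1)^l$ in the absence of conjugate pairs). The second would express $GW^{\widehat U}_{(X,\omega)}(d-k[E])$, which counts curves that may meet $E$, as a nonnegative combination $\sum_{j,b,\beta}\binom{\ast}{\ast}\binom{b}{\beta}\,N^b(d-j[E])$ of the same refined invariants, the binomials recording how a fixed intersection pattern with $E$ can be distributed. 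These are the exact analogs of the two displayed identities in the proof of Theorem \ref{thm:main}, with all factors of $2$ coming from pairs of $\tau$-conjugated intersection points now absent.

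Substituting the second formula into $\sum_{k\ge 0}u_{d\cdot[E],k}\,GW^{\widehat U}_{(X,\omega)}(d-k[E])$ and collecting, for each pair $(l,b)$, the coefficient of $N^b(d-l[E])$, the statement reduces to a single binomial identity: this coefficient must vanish when $l>b$ and must match the coefficient produced by the first formula when $l=b$. I would verify it exactly as the relations $u_{l,b,\beta}+u_{l,b,b-\beta}=0$ (for $l>b$) and $u_{l,l,\beta}+u_{l,l,l-\beta}=2(-1)^l$ are checked in the proof of Theorem \ref{thm:main}, namely by pairing the summation index $\beta$ with $b-\beta$ and recognizing the resulting alternating sum $\sum_p(-1)^p\binom{2l-2b}{p}$, which collapses. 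For the special case $d\cdot[E]=0$, the generalized Dehn twist along the $(-2)$-sphere $E$ realizes the Picard--Lefschetz reflection $\sigma_E(x)=x+(x\cdot[E])[E]$ and, being supported near $E$ and hence disjoint from $\widehat U$, preserves $GW^{\widehat U}_{(X,\omega)}$; since $\sigma_E(d-k[E])=d+k[E]$, this yields $GW^{\widehat U}_{(X,\omega)}(d-k[E])=GW^{\widehat U}_{(X,\omega)}(d+k[E])$, and with $u_{0,0}=1$ and $u_{0,k}=2(-1)^k$ for $k\ge 1$ the sum $\sum_{k\ge 0}u_{0,k}\,GW^{\widehat U}(d-k[E])$ symmetrizes into $\sum_{k\in\Z}(-1)^k GW^{\widehat U}(d-k[E])$.

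The hard part will be establishing the two expansion formulas with their precise coefficients: all of the geometric content lives there, in identifying the complex gluing multiplicities under the neck-stretching degeneration and in confirming that the refined counts $N^b(d-j[E])$ are the correct intermediate quantities. Once these are in place --- and the discussion preceding the proposition asserts that they follow from the same argument as in the real case --- the remaining combinatorics is the routine binomial manipulation already carried out for Theorem \ref{thm:main}.
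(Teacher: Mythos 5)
Your plan is built on a misreading of the definition of $GW^{U}_{(X,\omega)}(d)$: these numbers count curves whose image is \emph{not contained in} $E_1\cup\ldots\cup E_\kappa$, computed with an almost complex structure $J$ for which every $E_i$ is $J$-holomorphic; the curves are allowed to (and, when $d\cdot[E]\neq 0$, must) intersect $E$. Under your reading (``curves disjoint from all of $U$'') the statement you are trying to prove would actually be false: the left-hand side of the first formula would vanish identically whenever $d\cdot[E]\neq 0$, while the right-hand side does not. This error propagates into your first expansion formula $GW^{U}(d)=\sum_{l\ge 0}(-1)^l N^l(d-l[E])$, which cannot hold: with the correct definition, positivity of intersection forces any curve counted in class $d-j[E]$ to meet $E$ in exactly $(d-j[E])\cdot[E]=d\cdot[E]+2j$ points, so your refined counts collapse ($N^b(d-j[E])$ equals $GW^{U}(d-j[E])$ for $b=d\cdot[E]+2j$ and is zero otherwise --- there is no analogue of the real/conjugate-pair dichotomy that makes the $W^\beta_{Z_\R}$ a genuine refinement), and more fundamentally no complex symplectic sum argument can output alternating signs, since complex gluing multiplicities are positive. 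The signs $(-1)^k$ and $u_{m,k}$ in the proposition do not come from geometry at all: they come from \emph{inverting} a positive-coefficient formula.

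This is exactly what the paper's proof does, and it is much shorter than your plan. The single geometric input is \cite[Corollary 3.8]{BP14}, already available in the literature, which reads
\[
GW^{\widehat U}_{(X,\omega)}(d)=\sum_{k\ge 0}\binom{d\cdot[E]+2k}{k}\,GW^{U}_{(X,\omega)}(d-k[E]);
\]
the first formula of the proposition is then the formal inversion of this triangular system, quoted from \cite[Proposition 3.12]{Bru16}. Your substitution step is this inversion in disguise: after plugging the displayed formula into $\sum_{k}u_{d\cdot[E],k}\,GW^{\widehat U}(d-k[E])$, what is needed is only the orthogonality relation $\sum_{k+j=l}u_{m,k}\binom{m+2l}{j}=\delta_{l,0}$, i.e.\ pure combinatorics --- no ``first expansion formula'' with signed geometric coefficients exists or is needed. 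The one sound geometric step in your proposal is the symmetrization for $d\cdot[E]=0$: your Dehn-twist argument (which should be phrased after deforming $\omega$ so that $E$ becomes Lagrangian, since Dehn twists are attached to Lagrangian, not symplectic, spheres) is essentially the paper's reflection symmetry $GW^{U}(d')=GW^{U}(d'+(d'\cdot[E])[E])$, used there together with the displayed expansion to symmetrize the sum. But that step sits on top of the first formula, which your argument, as structured around the two expansion formulas modeled on Theorem \ref{thm:main}, does not establish.
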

 \begin{proof}
 It follows immediately from \cite[Corollary 3.8]{BP14} that
 \[
 GW^{\widehat U}_{(X,\omega)}(d)=\sum_{k\ge 0}
 \left(\begin{array}{c}d\cdot [E]+2k\\ k\end{array}\right)
 GW^U_{(X,\omega)}(d-k[E]).
 \]
 So the first formula follows from \cite[Proposition 3.12]{Bru16}.
 If  $d\cdot [E]=0$, this formula becomes
 \[
 GW^{U}_{(X,\omega)}(d)= GW^{\widehat U}_{(X,\omega)}(d)+ 2\sum_{k\ge 0}
 (-1)^{k}
 GW^{\widehat U}_{(X,\omega)}(d-k[E]).
 \]
Thanks to the equality
 $GW^{U}_{(X,\omega)}(d')=GW^{U}_{(X,\omega)}(d'+(d'\cdot
[E])[E])$ for any $d'\in H_2(X;\Z)$, we obtain the desired result.
 \end{proof}

 The combination of
 Theorem \ref{thm:main} with Proposition \ref{prop:invABV}
 implies the following ``refined flavored'' corollary.

 \begin{cor}\label{cor:refined}  
   Let $X_\R$ be either the quadric hyperboloid $QH$ in $\CP^3$,
or $\CP^2$
blown-up at
 finitely many real points. Let also  
   $U= E_1\cup \ldots\cup E_\kappa$ be a finite collection of
pairwise disjoint
real embedded symplectic spheres
with $[E_i]^2=-2$.
Suppose that we are given a function
$  \Gamma_{X_\R}:H_2(X;\Z)\to \Z[q,q^{-1}]$ such that
for any $d\in H_2(X;\Z)$, one has
\[
\Gamma_{X_\R}(d)(1)=GW_{(X,\omega)}(d) \qquad\mbox{and}\qquad
\Gamma_{X_\R}(d)(-1)=W_{[X_\R]}(d;0).
\]
Let $Y_\R$ be the surgery of $X_\R$ along the real Lagrangian spheres
$E_1,\cdots,E_\kappa$, and define the following function
\[
\begin{array}{cccc}
  \Gamma_{Y_\R}:&H_2(X;\Z)&\longrightarrow& \Z[q,q^{-1}]
  \\ & d&\longmapsto&
  \displaystyle \sum_{k_1,\cdots,k_\kappa \ge 0}
  \left(\prod_{i=1}^\kappa u_{d\cdot [E_i],k_i}  \right)
  \Gamma_{X_\R}(d-k_1[E_1]- \cdots k_\kappa[E_\kappa]).
\end{array}
\]
Then $ \Gamma_{Y_\R}$ satisfies the two following properties: 
\begin{align*}
 &  \forall  d\in H_2(X;\Z), \ 
   \Gamma_{Y_\R}(d)(1)=GW^U_{(X,\omega)}(d); 
   \\& \forall  d\in H_2^{-\tau_Y}(X;\Z), \ \Gamma_{Y_\R}(d)(-1)=W_{Y_\R}(d;0).
\end{align*}
\end{cor}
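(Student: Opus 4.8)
The plan is to verify the two claimed properties of $\Gamma_{Y_\R}$ by evaluating at $q=1$ and at $q=-1$ separately: in each case I would substitute the corresponding interpolation property of $\Gamma_{X_\R}$ into the defining formula, and then iterate Proposition \ref{prop:invABV} for the first property and Theorem \ref{thm:main} for the second. Throughout I would use that the spheres $E_1,\ldots,E_\kappa$ are pairwise disjoint, so that $[E_i]\cdot[E_j]=0$ for $i\neq j$; this makes the intersection numbers $(d-\sum_{j}k_j[E_j])\cdot[E_i]=d\cdot[E_i]$ stable under the peeling process, and lets the multivariable sums factor as the stated product over $i$. I would also note that only finitely many tuples $(k_1,\ldots,k_\kappa)$ contribute, since both $GW$ and $W$ vanish once the symplectic area of $d-\sum_i k_i[E_i]$ becomes negative (recall $c_1(X)\cdot[E_i]=0$ and $[E_i]^2=-2$).

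For the value at $q=1$, I would substitute $\Gamma_{X_\R}(\cdot)(1)=GW_{(X,\omega)}(\cdot)=GW^{\emptyset}_{(X,\omega)}(\cdot)$ into the definition of $\Gamma_{Y_\R}$ and apply Proposition \ref{prop:invABV} repeatedly, peeling off one sphere at a time. Taking $E=E_1$ expresses $GW^{U}_{(X,\omega)}(d)$ as a $u_{d\cdot[E_1],\,\cdot}$-weighted sum of $GW^{U\setminus\{E_1\}}_{(X,\omega)}$; applying the proposition again with $E=E_2$ to each summand (using $(d-k_1[E_1])\cdot[E_2]=d\cdot[E_2]$), and so on, telescopes $GW^{U}_{(X,\omega)}(d)$ down to the absolute invariant $GW^{\emptyset}_{(X,\omega)}=GW_{(X,\omega)}$ weighted by $\prod_{i=1}^\kappa u_{d\cdot[E_i],k_i}$. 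This is precisely $\Gamma_{Y_\R}(d)(1)$, and by construction it equals $GW^U_{(X,\omega)}(d)$, which gives the first property.

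For the value at $q=-1$, I would substitute $\Gamma_{X_\R}(\cdot)(-1)=W_{[X_\R]}(\cdot;0)$. The key observation is that the coefficients degenerate: since $d\in H_2^{-\tau_Y}(X;\Z)$ is $\tau_Y$-anti-invariant while each $[E_i]$ is $\tau_Y$-invariant, one has $d\cdot[E_i]=0$ for all $i$, and one checks that the numbers $u_{0,k}$ then coincide with the coefficients appearing in the rewritten form $(\ref{eq:proof})$ of Theorem \ref{thm:main}, namely $u_{0,0}=1$ and $u_{0,k}=2(-1)^k$ for $k\ge 1$. I would therefore realize $Y_\R$ as an iterated surgery $X_\R=X^{(0)}_\R\to X^{(1)}_\R\to\cdots\to X^{(\kappa)}_\R=Y_\R$, where $X^{(i)}_\R$ is the surgery of $X^{(i-1)}_\R$ along $E_i$, each step satisfying $\chi(\R X^{(i)})=\chi(\R X^{(i-1)})+2$ because $[E_i]$ is anti-invariant on the source. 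Applying Theorem \ref{thm:main} at each step, and invoking Theorem \ref{thm:main0} to suppress the partitions $\rho$, I would obtain $W_{Y_\R}(d;0)=\sum_{k_1,\ldots,k_\kappa\ge 0}\prod_{i=1}^\kappa u_{0,k_i}\,W_{[X_\R]}\big(d-\sum_i k_i[E_i];0\big)=\Gamma_{Y_\R}(d)(-1)$.

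I expect no serious obstacle here, since both ingredients are already in place; the work is essentially bookkeeping. The two points demanding care are, first, confirming that the disjointness $[E_i]\cdot[E_j]=0$ genuinely makes the successive applications of Proposition \ref{prop:invABV} and of Theorem \ref{thm:main} commute and factor into the advertised product, and second, the one small computation identifying $u_{0,k}$ with the coefficients in $(\ref{eq:proof})$. The latter is exactly where the two evaluations are seen to rest on the same combinatorial backbone: the coefficients $u_{m,k}$ governing the relative Gromov–Witten formula at $q=1$ specialize, at $m=d\cdot[E_i]=0$, to the very coefficients governing the Welschinger surgery formula at $q=-1$.
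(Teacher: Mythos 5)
Your proposal is correct and follows exactly the route the paper intends: the paper offers no detailed argument beyond asserting that Corollary \ref{cor:refined} is ``the combination of Theorem \ref{thm:main} with Proposition \ref{prop:invABV}'', and your proof is precisely that combination, iterating Proposition \ref{prop:invABV} sphere by sphere at $q=1$ and Theorem \ref{thm:main} at $q=-1$, with the disjointness $[E_i]\cdot[E_j]=0$ making the coefficients factor into the product $\prod_i u_{d\cdot[E_i],k_i}$. Your two flagged care points (the factorization under iteration, and the identification $u_{0,0}=1$, $u_{0,k}=2(-1)^k$ for $k\ge 1$ with the coefficients of $(\ref{eq:proof})$, which applies since $d\in H_2^{-\tau_Y}(X;\Z)$ forces $d\cdot[E_i]=0$) are exactly the verifications needed, and both check out.
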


Following  Corollary \ref{cor:refined},  we discuss in the next two subsections
a refined version of Corollary  \ref{cor:wel}, and of
\cite[Proposition 2.7]{BP14}.

\subsection{A refined Corollary \ref{cor:wel}}

Refined tropical descendant
invariants defined in \cite{GotSch16} can be computed 
using floor diagrams from \cite{BGM}, equipped with refined weights as
indicated in \cite{BleShu17}.
Next proposition, whose proof we omit, is an easy corollary of
this floor diagrammatic computation.
Recall that $QH$ is the real quadric hyperboloid in $\CP^3$.

\begin{prop}\label{prop:refined wel}
  Let  $X_\R$ be either $QH$ or $\CP^2$ blown-up in at most two
  distinct
  real points, and
 let $\widetilde X_\R$ be the blow-up of $X_\R$ at one real point. We denote by $E$ the exceptional divisor.
 Then for any $d\in H_2(X;\Z)$ and $s\in\Z_{\ge 0}$ such that
  $c_1(X)\cdot  d-2s\ge 3$, one has
  \[
  G_{X_\R}(d,0;s+1)=G_{X_\R}(d,0;s) - 2G_{\widetilde X_\R}(d-2E,0;s).
  \]
\end{prop}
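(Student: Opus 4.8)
The plan is to expand both sides of the claimed identity as sums over floor diagrams and to reduce it to a single local combinatorial move. By \cite{BGM} and \cite{BleShu17}, the refined descendant invariant $G_{X_\R}(d,0;s)$ is computed as a sum, over marked floor diagrams $\D$ of degree $d$ adapted to the toric surface $X_\R$, of refined multiplicities $\nu_q(\D)\in\Z[q,q^{-1}]$; here $c_1(X)\cdot d-1-2s$ of the markings carry a point condition while the remaining $s$ carry the descendant ($\psi$) condition. Since $X_\R$ is $QH$ or $\CP^2$ blown up in at most two real points, and $\widetilde X_\R$ is one further real blow-up, all three invariants appearing in the statement are of this type, and their floor diagrams live on a common template differing only by the extra exceptional class $E$. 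The hypothesis $c_1(X)\cdot d-2s\ge 3$ guarantees that all three invariants are defined, the number of point markings staying nonnegative after the move described below.

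First I would isolate the role of the last descendant marking in the floor-diagram expansion of $G_{X_\R}(d,0;s+1)$. Mimicking the Caporaso--Harris type move that proves Corollary \ref{cor:wel} in the non-refined setting, one lets this marking specialize: the diagrams in which it behaves like a free point marking reassemble exactly into the expansion of $G_{X_\R}(d,0;s)$, while the diagrams in which the descendant condition forces a local degeneration must be accounted for separately. The claim is then that these degenerate diagrams are in bijection with the floor diagrams computing $G_{\widetilde X_\R}(d-2E,0;s)$, in such a way that the refined multiplicity of each degenerate diagram equals $-2$ times that of its partner on $\widetilde X_\R$; here the replacement of the degenerate descendant point by a real blow-up with exceptional divisor $E$ turns the local tangency condition into the class shift $d\mapsto d-2E$.

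The heart of the argument is therefore the verification that the refined multiplicities are compatible with this bijection, and in particular that the local factor is exactly the \emph{constant} $-2$, independent of $q$. I expect this to be the main obstacle: one must check that the extra local structure created at the blown-up point (the weight-$2$ tangency encoded by $-2E$) contributes a refined weight equal to $2$, together with a sign $-1$ coming from the node produced by the degeneration, rather than a nontrivial $q$-polynomial. Because the identity is an equality of Laurent polynomials with a $q$-independent coefficient, this is the one place where the refinement could conceivably fail, and it is precisely where the explicit refined weights of \cite{BleShu17} must be used rather than their $q=\pm1$ specializations. Once this local weight computation is carried out, summing over all diagrams yields the stated identity, which specializes at $q=-1$ to Corollary \ref{cor:wel} and at $q=1$ to the analogous relation among genus-$0$ descendant Gromov--Witten invariants.
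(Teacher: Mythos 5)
Your overall strategy --- expanding both sides as sums over the psi-floor diagrams of \cite{BGM} equipped with the refined weights of \cite{BleShu17}, and reducing the identity to a local move at one special marking --- is exactly the route the paper has in mind: it states Proposition \ref{prop:refined wel} as ``an easy corollary of this floor diagrammatic computation'' and omits the details. However, as written your argument has a genuine gap, and it sits precisely at the point you yourself flag: the assertion that the ``degenerate'' diagrams are in bijection with the diagrams computing $G_{\widetilde X_\R}(d-2E,0;s)$, with local refined factor exactly $-2$, is never verified --- you only say you ``expect'' it. Since the $q$-independence of this local factor is the whole content of the refined statement (the specializations at $q=\pm 1$ are already known, by Corollary \ref{cor:wel} and its Gromov--Witten analogue), a proposal that leaves it unproved has not proved the proposition. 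The verification is a finite computation with the explicit Blechman--Shustin weights, but that computation \emph{is} the proof; identifying where it must happen is not a substitute for doing it.

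There is also a numerical slip in your reassembly step. A psi-marking carries a point condition together with a $\psi$-condition, so the diagrams for $G_{X_\R}(d,0;s+1)$ have $c_1(X)\cdot d-3-2s$ plain markings and $s+1$ psi-markings, i.e.\ $c_1(X)\cdot d-2-s$ markings in total, whereas those for $G_{X_\R}(d,0;s)$ have $c_1(X)\cdot d-1-2s$ plain markings and $s$ psi-markings, i.e.\ $c_1(X)\cdot d-1-s$ markings in total --- one more. Hence the last psi-marking cannot simply ``behave like a free point marking'' and reassemble exactly into the expansion of $G_{X_\R}(d,0;s)$: no bijection of the kind you describe can exist, because the marking counts do not match. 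The correct local move, mirroring the wall-crossing that underlies Corollary \ref{cor:wel}, replaces one psi-marking (tropically, a $4$-valent vertex, i.e.\ the image of a pair of conjugated points) by \emph{two} plain point markings; the correction term $-2G_{\widetilde X_\R}(d-2E,0;s)$ then accounts for those diagrams in which the two new markings are forced onto a local configuration dual to a curve with a node at the collision point, which is what produces the class $d-2E$ on the blow-up. Setting up the correspondence with the correct count of markings, and then carrying out the refined-weight comparison for it, is what is needed to turn your outline into a proof.
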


By Corollary \ref{cor:refined},
Proposition \ref{prop:refined wel} is clearly a refined version of
Corollary \ref{cor:wel}. In particular, an induction on $s$ provides
an alternative proof
of  \cite[Corollaries 3.14 and 3.29]{GotSch16} (however it does not
explain the tropical invariance of the numbers $G_{X_\R}(d,0;s)$).

\begin{exa}
One computes easily, for example using floor diagrams, that the coefficient of degree
$\frac{(d-1)(d-2)}{2}-1$ of $G_{\CP^2}(d,0;0)$ is $3d+1$. Since this
coefficient is $1$ for $G_{\widetilde{\CP^2}}(d-2E,0;s)$,
we deduce from Proposition \ref{prop:refined wel}
that the coefficient of degree
$\frac{(d-1)(d-2)}{2}-1$ of $G_{\CP^2}(d,0;s)$ is $3d+1-2s$. In
particular when  $s$ is maximal, this coefficient is equal to $2$ or
$3$  depending on the
parity of $d$.
\end{exa}

Since all coefficients of $G_{X_\R}(d,0;s)$ are
positive for an unnodal real del Pezzo toric surface $X_\R$,
Proposition \ref{prop:refined wel} immediately implies the
following.
\begin{cor}\label{cor:refined IKS}
   Let  $X_\R$ be either $QH$ or $\CP^2$ blown-up in at most two
  distinct
  real points. For $d\in H_2(X;\Z)$, we denote by
  $a_{r,s}$ the coefficient of degree $r$ of $G_{X_\R}(d,0;s)$.
  Then one has
  \[
\forall  r\in\left\{1,\cdots , \frac{d^2-c_1(X)\cdot d+2}{2}\right\},\qquad   a_{r,0}\ge a_{r,1}\ge \cdots \ge a_{r,\left[ \frac{c_1(X)\cdot
        d-1}{2}\right]}> 0.
  \]
\end{cor}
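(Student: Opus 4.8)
We need to show that the coefficients $a_{r,s}$ of the tropical refined descendant invariants $G_{X_\R}(d,0;s)$ are non-increasing in $s$ and remain strictly positive, for $r$ in the stated range and $s$ up to $\left[\frac{c_1(X)\cdot d-1}{2}\right]$.

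The plan is to run an induction on $s$ driven directly by Proposition \ref{prop:refined wel}. The key structural input is the recursion
\[
G_{X_\R}(d,0;s+1)=G_{X_\R}(d,0;s)-2G_{\widetilde X_\R}(d-2E,0;s),
\]
valid whenever $c_1(X)\cdot d-2s\ge 3$. Reading off the coefficient of degree $r$ on both sides, if I write $\widetilde a_{r,s}$ for the degree-$r$ coefficient of $G_{\widetilde X_\R}(d-2E,0;s)$, then $a_{r,s+1}=a_{r,s}-2\widetilde a_{r,s}$. Since the paper records just above the statement that \emph{all} coefficients of $G_{X_\R}(d,0;s)$ are non-negative for an unnodal real del Pezzo toric surface (and $\widetilde X_\R$ is such a surface), the subtracted term $2\widetilde a_{r,s}$ is non-negative, which gives $a_{r,s+1}\le a_{r,s}$ immediately. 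This chain of inequalities $a_{r,0}\ge a_{r,1}\ge\cdots$ is the easy half, and it holds for every $r$.

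The harder half is the strict positivity $a_{r,\left[\frac{c_1(X)\cdot d-1}{2}\right]}>0$ at the top of the chain. First I would check that the range of $s$ is exactly the set of values for which the recursion is available: the condition $c_1(X)\cdot d-2s\ge 3$ means $s$ can be pushed up to $\left[\frac{c_1(X)\cdot d-3}{2}\right]$, so the last value $s_{\max}=\left[\frac{c_1(X)\cdot d-1}{2}\right]$ is reached by one final application (or matches the parity cutoff where no real points remain to interpolate). To prove $a_{r,s_{\max}}>0$ I would argue that at the maximal $s$ the invariant $G_{X_\R}(d,0;s_{\max})$ specializes at $q=-1$ to the Welschinger invariant $W_{[X_\R]}(d;s_{\max})$, whose nonvanishing and sharpness in the relevant degrees is exactly what Theorem \ref{thm:sign} furnishes for surfaces with disconnected real part; combined with the floor-diagram positivity, a single surviving floor diagram of the appropriate type contributes a strictly positive (constant or monomial) refined weight in degree $r$. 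The range $r\in\{1,\cdots,\frac{(d-1)(d-2)}{2}\}$ should correspond precisely to those degrees where such a diagram persists after all $s_{\max}$ subtractions.

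The main obstacle I expect is pinning down the strict inequality at the top end rather than merely $\ge 0$: the telescoping recursion only yields monotone non-increase, so one must supply an independent lower bound showing the coefficient has not been driven all the way to zero by the repeated subtraction of $2\widetilde a_{r,s}$. The cleanest route is to exhibit, via the floor diagrams of \cite{BGM} with the refined weights of \cite{BleShu17}, a specific family of diagrams surviving in each degree $r$ of the stated range and contributing a positive coefficient that the subtraction cannot cancel; the illustrative computation in the preceding Example (where the top-but-one coefficient is shown to equal $3d+1-2s$, positive throughout the admissible $s$-range) is the prototype for this argument, and I would generalize that explicit bookkeeping across all $r$ in the range.
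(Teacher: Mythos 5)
Your first half is sound and coincides with the paper's argument: reading off the degree-$r$ coefficient in Proposition \ref{prop:refined wel} gives $a_{r,s+1}=a_{r,s}-2\widetilde a_{r,s}$, the subtracted term is non-negative by the positivity of the coefficients of $G_{\widetilde X_\R}(d-2E,0;s)$, and the hypothesis $c_1(X)\cdot d-2s\ge 3$ allows exactly the applications needed to build the chain up to $s_{\max}=\left[\frac{c_1(X)\cdot d-1}{2}\right]$.

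The gap is in what you call the ``harder half'', the strict positivity $a_{r,s_{\max}}>0$, and your proposed route for it would fail. Theorem \ref{thm:sign} cannot be invoked here: it requires $X_\R$ to have a \emph{disconnected} real part, whereas every surface in the statement has connected real part ($\R QH$ is a torus, and $\CP^2$ blown up in at most two real points has real part a connected non-orientable surface). Moreover, even if one knew $W_{[X_\R]}(d;s_{\max})\neq 0$, that only constrains the signed sum $G_{X_\R}(d,0;s_{\max})(-1)=\sum_r a_{r,s_{\max}}(-1)^r$ and gives no information about any individual coefficient. The point you are missing is that no new argument is needed at all: the positivity fact you already quoted --- all coefficients of $G_{X_\R}(d,0;s)$ are (strictly) positive for an unnodal real toric del Pezzo surface --- applies to $X_\R$ itself, not only to $\widetilde X_\R$, since $QH$ and $\CP^2$ blown up in at most two real points are precisely such surfaces. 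Applied to $X_\R$ it yields $a_{r,s}>0$ for \emph{every} admissible $s$, in particular for $s_{\max}$, and together with your monotone chain this is the entire proof; this is exactly how the paper argues. Your fallback plan (exhibiting surviving floor diagrams in each degree $r$ and generalizing the bookkeeping of the example) amounts to re-proving that cited positivity, which is unnecessary and which you do not actually carry out --- note also that you silently weakened the cited fact from ``positive'' to ``non-negative'', which is what forced you to hunt for a separate positivity argument in the first place.
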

Corollary \ref{cor:refined IKS} may be seen as a refined version of
\cite[Conjecture 6]{IKS2}. It is amusing that although this conjecture
has been proven to be wrong in \cite{Wel4} and \cite{Br8}, its refined
version eventually holds.

\subsection{Refined invariants of $\CP^1\times\CP^1$ and $\Sigma_2$}
Recall that the quadric hyperboloid $QH$ can be deformed to the second
Hirzebruch surface $\Sigma_2$ equipped with its toric real structure. 
This can be done algebraically
by degenerating the quadric hyperboloid $QH$ to a
nodal quadric, and by blowing up the node.
The surgery of $X_\R$ along the
expcetional curve in $\Sigma_2$
is, up to deformation, the quadric ellipsoid 
 $QE$ in $\CP^3$. Since both $\CP^1\times \CP^1$ and $\Sigma_2$ are
 toric, Corollary \ref{cor:refined} suggests  the following
 conjecture\footnote{As stated, Corollary \ref{cor:refined} only
   suggests the conjecture for $g=0$. Nevertheless,
   \cite[Corollary 3.17]{Bru16} suggests its extension to any
   genus.} \footnote{This conjecture has been now proved by Bousseau in
   \cite{Bou19}}. 
 \begin{conj}\label{conj:quadric}
   We denote by $\square_{a,b}$ (resp. $\triangle_{a,b}$) the
class in $H_2(\CP^1\times \CP^1;\Z)$ (resp. $\Sigma_2$) defined
by the 
   convex
polygon in  $\R^2$ with 
vertices $(0,0)$, $(a,0)$, $(0,b)$, and $(a,b)$
(resp. $(0,0)$, $(2a+b,0)$, $(0,a)$, and $(b,a)$), see Figure
\ref{fig:NP1}.
\begin{figure}[h!]
\begin{center}
\begin{tabular}{ccc}
  \includegraphics[width=4cm, angle=0]{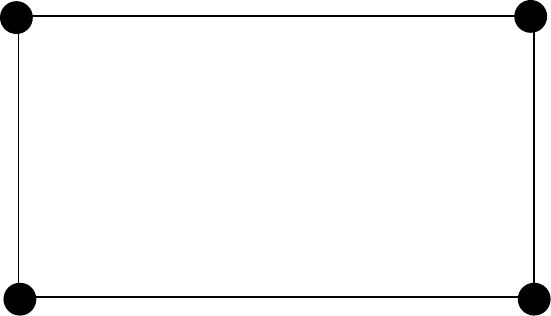}
  \put(-140,0){$(0,0)$}
  \put(-140,60){$(0,b)$}
  \put(2,60){$(a,b)$}
  \put(2,0){$(a,0)$}
& \hspace{18ex}
&\includegraphics[width=4cm, angle=0]{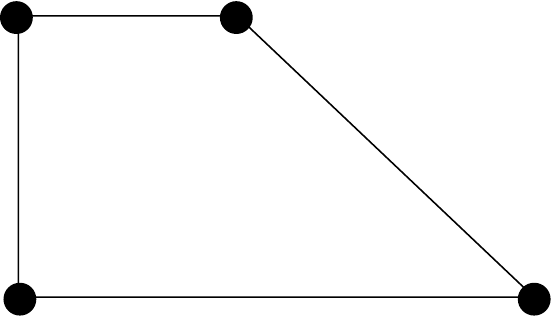}
  \put(-140,0){$(0,0)$}
  \put(-140,60){$(0,a)$}
  \put(-55,60){$(b,a)$}
  \put(2,0){$(2a+b,0)$}
\\ \\a) $\square_{a,b}$
&&b) $\triangle_{a,b}$
\end{tabular}
\end{center}
\caption{}
\label{fig:NP1}
\end{figure}
Then for any integers $a,b,g\ge 0$, the tropical
refined invariants $G_{QH}(\square,g)$ and $G_{\Sigma_2}(\triangle,g)$ satisfy the
following relations:
\[
G_{\Sigma_2}(\triangle_{a,b},g)= \sum_{k\ge 0}
u_{b,k} G_{QH}(\square_{a+b+k,a-k},g).
\]
 \end{conj}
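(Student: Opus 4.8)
The plan is to read Conjecture \ref{conj:quadric} as the $q$-refinement of the surgery formula, applied to the deformation equivalence between $QH=\Sigma_0=\CP^1\times\CP^1$ and $\Sigma_2$. Writing $C_0$ for the $(-2)$-section of $\Sigma_2$, $f$ for its fibre, and $F_1,F_2$ for the two rulings of $\CP^1\times\CP^1$, the isometry $f\leftrightarrow F_2$, $C_0\leftrightarrow F_1-F_2$ of the two lattices identifies $\triangle_{a,b}=aC_0+(2a+b)f$ with $aF_1+(a+b)F_2$, and more generally $\triangle_{a,b}-k\,C_0$ with the rectangle $\square_{a+b+k,a-k}$; moreover $\triangle_{a,b}\cdot C_0=b$. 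Hence the right-hand side is exactly the combination $\sum_{k\ge 0}u_{b,k}\,G_{QH}(\triangle_{a,b}-k\,C_0,g)$ occurring in Corollary \ref{cor:refined} for $X_\R=QH$ and the single $(-2)$-sphere $C_0$ (with $b=\triangle_{a,b}\cdot C_0$). At $q=1$ this is the genus-$g$ complex degeneration relation for $C_0$ --- Proposition \ref{prop:invABV} when $g=0$, and its classical higher-genus analogue otherwise --- while its $q=-1$, $g=0$ avatar is Theorem \ref{thm:main}. For $g=0$, Corollary \ref{cor:refined} therefore already guarantees that the right-hand side takes the correct Gromov--Witten and Welschinger values at $q=1$ and $q=-1$; the content of the conjecture is that the \emph{full} Laurent polynomial $G_{\Sigma_2}(\triangle_{a,b},g)$ is reproduced by this combination with the \emph{$q$-independent} integers $u_{b,k}$, and that the interpolation persists in higher genus (as suggested by \cite[Corollary 3.17]{Bru16}).

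To establish it I would compute both sides through refined floor diagrams, following the (omitted) proof of Proposition \ref{prop:refined wel} and using \cite{BGM} together with the refined weights of \cite{BleShu17}. The rectangle $\square_{a+b+k,a-k}$ is already $h$-transverse; the trapezoid $\triangle_{a,b}$ becomes so after the unimodular shear $(x,y)\mapsto(x+y,y)$, which turns its slanted edge of direction $(-2,1)$ into the admissible direction $(-1,1)$, so that the floor calculus of \cite{BGM} applies to both polygons. I would then set up a correspondence that resolves the slanted edge of the sheared trapezoid into the vertical ends of the rectangles, the index $k$ recording how many units of the $C_0$-direction are absorbed, and compare the refined multiplicities $\mu_q$ of matching floor diagrams. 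This reduces the identity to a weighted summation over floor diagrams whose combinatorial skeleton is precisely the binomial manipulation carried out in the proof of Theorem \ref{thm:main}, with each ordinary binomial coefficient replaced by its quantum analogue built from the $q$-integers $[w]_q=(q^{w/2}-q^{-w/2})/(q^{1/2}-q^{-1/2})$.

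The hard part will be the appearance of the \emph{$q$-independent} coefficients $u_{b,k}$. A naive refinement of the tangency relations underlying Proposition \ref{prop:invABV} produces transition coefficients that are themselves quantum binomials, and one must show that, for the $(-2)$-curve $C_0$ and these particular Newton polygons, all of this $q$-dependence is absorbed into the invariants $G_{QH}(\square_{a+b+k,a-k},g)$ themselves, leaving only the plain integers $u_{b,k}$ in front. Equivalently, one must prove a refined (quantum) degeneration formula specializing to Proposition \ref{prop:invABV} at $q=1$ and to Theorem \ref{thm:main} at $q=-1$ --- exactly the unified complex--real enumerative interpretation of tropical refined invariants that, as recalled in Section \ref{sec:refined}, is still missing. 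It is this step that keeps the statement at the level of a conjecture: while the combinatorial identity can be checked in low bidegree by direct floor-diagram computation, a uniform proof appears to require the refined degeneration formula as an input rather than to produce it as an output.
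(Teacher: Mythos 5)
The statement you were asked to prove is Conjecture~\ref{conj:quadric}: the paper itself contains no proof of it, only the heuristic derivation from Corollary~\ref{cor:refined}, numerical verification in low bidegrees (Appendix~\ref{app:computation}), and the closing remark that a refined version of the strategy of \cite{Br18} might yield a combinatorial proof. Your proposal matches this treatment exactly --- your homological bookkeeping ($\triangle_{a,b}=aC_0+(2a+b)f$, $\triangle_{a,b}\cdot C_0=b$, $\triangle_{a,b}-kC_0=\square_{a+b+k,a-k}$) correctly exhibits the right-hand side as the combination of Corollary~\ref{cor:refined} for the single $(-2)$-sphere $C_0$, your floor-diagram strategy is the one the paper points to, and you correctly isolate the genuine obstruction, namely the lack of a refined degeneration formula specializing to Proposition~\ref{prop:invABV} at $q=1$ and Theorem~\ref{thm:main} at $q=-1$, which is precisely why the statement remains a conjecture rather than a theorem.
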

 Note that this can be seen as
 a refined version of \cite[Proposition 2.7]{BP14}:  denoting by $h$ the
 hyperplane section class of the quadric ellipsoid $QE$, one has
 \[
 \forall a\ge 0, \ G_{\Sigma_2}(\triangle_{a,0},0)(-1)=  W_{QE}(ah ;0).
 \]
In the next examples, we check that Conjecture \ref{conj:quadric}
holds in a few cases.
\begin{exa}
  The cases $a=1$ or $g=(a-1)(a-1+b)$
  hold trivially. 
\end{exa}

The value of all refined invariants needed in the next examples are
provided in Appendix \ref{app:computation}.

\begin{exa}
  In the case $(a,b)=(2,0)$ and $g=0$, one has
  \[
  G_{\Sigma_2}(\triangle_{2,0},0;s)= G_{QH}(\square_{2,2},0;s) -2G_{QH}(\square_{3,1},0;s).
  \]
\end{exa}

\begin{exa}
  The case $(a,b)=(2,2)$ gives
  \begin{align*}
    &g=2:& &  G_{\Sigma_2}(\triangle_{2,2},2)= G_{QH}(\square_{4,2},2);
    \\    & g=1: &&   G_{\Sigma_2}(\triangle_{2,2},1)=  G_{QH}(\square_{4,2},1);
    \\     & g=0 :&&
    G_{\Sigma_2}(\triangle_{2,2},0;s)= G_{QH}(\square_{4,2},0;s)
    - 4  G_{QH}(\square_{5,1},0;s).
 \end{align*}
  
\end{exa}

\begin{exa}
  A more interesting case is given by $(a,b)=(3,0)$ and $g\in\{0,1,2,3\}$.
  We obtain the following
  \begin{align*}
    & g=3:&&
 G_{\Sigma_2}(\triangle_{3,0},3)= G_{QH}(\square_{3,3},3) - 2G_{QH}(\square_{4,2},3).
\\ &g=2: &&
  G_{\Sigma_2}(\triangle_{3,0},2)= G_{QH}(\square_{3,3},2) -2G_{QH}(\square_{4,2},2).
 \\ &g=1: &&
  G_{\Sigma_2}(\triangle_{3,0},1)= G_{QH}(\square_{3,3},1) -2G_{QH}(\square_{4,2},1).
 \\ &g=0: &&
   G_{\Sigma_2}(\triangle_{3,0},0;s)= G_{QH}(\square_{3,3},0;s) -2G_{QH}(\square_{4,2},0;s)+
   2G_{QH}(\square_{5,1},0;s).
 \end{align*}
\end{exa}

A refined version of the strategy used in \cite{Br18} may lead to
a combinatorial proof of Conjecture \ref{conj:quadric}, however it is not
clear that such a combinatorial proof will be geometrically
meaningful.

\appendix
\section{Some computations of tropical refined invariants}\label{app:computation}

In this appendix we provide a few values of tropical refined invariants
of tropical projective plane and Hirzebruch surfaces of small degree. 
All non-trivial computations of absolute refined invariants have been done using floor diagrams
\cite{Br6b,BlGo14,BIMS15}. The computations of tropical refined descendant
invariants have been done
using floor diagrams from \cite{BGM} equipped with refined weights as
indicated in \cite{BleShu17}.

\subsection{$\T P^1\times \T P^1$}
$\ $

\noindent Here we give the values of $G_{QH}(\square_{a,b},g)$  for a few
 $a$ and $b$.
Note that $G_{QH}(\square_{a,b})=G_{QH}(\square_{b,a})$

\begin{itemize}
\item {\bf $a=1$:}
   \begin{align*}
&&& 
  G_{QH}(\square_{1,b},0)=1 &\hfill
\end{align*}

\item {\bf $(a,b)=(2,0)$:}
   \begin{align*}
         & g=1 :&&
    G_{QH}(\square_{2,2},1)=1  & \hfill 
    \\     & g=0 :&&
  G_{QH}(\square_{2,2},0;s)=q^{-1}+ (10-2s) + q
\end{align*}

\item {\bf $(a,b)=(2,4)$:}
  \begin{align*}
    &g=3:& &   G_{QH}(\square_{2,4},3)= 1 &\hfill
\\    &g=2:& &   G_{QH}(\square_{2,4},2)= 3q^{-1} +22+3q
    \\    & g=1: &&  G_{QH}(\square_{2,4},1) = 3q^{-2} +36q^{-1} +162+
     36q +3q^2
    \\     & g=0 :&&
     G_{QH}(\square_{2,4},0;0)= q^{-3}+ 14q^{-2}+ 95q^{-1} +420+ 95q +
     14q^{2}+q^{3}
    \\     & &&
     G_{QH}(\square_{2,4},0;1)= q^{-3}+ 12q^{-2}+ 71q^{-1} +280+ 71q +
     12q^{2}+q^{3}
    \\     & &&
     G_{QH}(\square_{2,4},0;2)= q^{-3}+ 10q^{-2}+ 51q^{-1} +180+ 51q +
     10q^{2}+q^{3}
     \\     & &&
     G_{QH}(\square_{2,4},0;3)= q^{-3}+ 8q^{-2}+ 35q^{-1} +112+ 35q +
     8q^{2}+q^{3}
    \\     & &&
     G_{QH}(\square_{2,4},0;4)= q^{-3}+ 6q^{-2}+ 23q^{-1} +68+ 23q +
     6q^{2}+q^{3}
    \\     & &&
     G_{QH}(\square_{2,4},0;5)= q^{-3}+ 4q^{-2}+ 15q^{-1} +40+ 15q +
     4q^{2}+q^{3}
\end{align*}

\item {\bf $(a,b)=(3,3)$:}

  \begin{align*}
      &g=4:& &  G_{QH}(\square_{3,3},4)=1
  \\  &g=3:& &  G_{QH}(\square_{3,3},3)=4q^{-1} +26+ 4q
  \\  &g=2:& &  G_{QH}(\square_{3,3},2)=6q^{-2}+64q^{-1}+256+ 64q+6q^2
  \\    & g=1: &&   G_{QH}(\square_{3,3},1)=4q^{-3}+52q^{-2}+332q^{-1}+1144+332q+52q^2+4q^3
    \\     & g=0 :&&
    G_{QH}(\square_{3,3},0;0)=
    q^{-4}+14q^{-3}+109q^{-2}+592q^{-1}+2078+592q+109q^2+14q^3+q^4
    \\     & &&
     G_{QH}(\square_{3,3},0;1)=q^{-4}+
     12q^{-3}+83q^{-2}+404q^{-1}+1270+404q+83q^2+12q^3+q^4
    \\     & &&
     G_{QH}(\square_{3,3},0;2)= q^{-4}+
     10^{-3}+61q^{-2}+264^{-1}+742+264q+61q^2+10q^3+q^4
     \\     & &&
     G_{QH}(\square_{3,3},0;3)= q^{-4}+
    8q^{-3}+43q^{-2}+164q^{-1}+ 414+164q+43q^2+8q^3+q^4
    \\     & &&
     G_{QH}(\square_{3,3},0;4)= q^{-4}+
    6q^{-3}+29q^{-2}+96q^{-1}+ 222+96q+29q^2+6q^3+q^4
    \\     & &&
     G_{QH}(\square_{3,3},0;5)= q^{-4}+
     4q^{-3}+19q^{-2}+52q^{-1}+118+52q+19q^2+4q^3+q^4
 \end{align*}
\end{itemize}

\subsection{$\T \Sigma_2$}
$\ $

\noindent Here we give the values of $G_{\Sigma_2}(\triangle_{a,b},g)$  for a few  $a$ and $b$.

\begin{itemize}
\item {\bf $a=1$:}
  \[
  G_{\Sigma_2}(\triangle_{1,b},0)=1
  \]

\item {\bf $(a,b)=(2,0)$:}
  \begin{align*}
         & g=1 :&&
    G_{\Sigma_2}(\triangle_{2,0},1)=1  & \hfill 
    \\     & g=0 :&&
   G_{\Sigma_2}(\triangle_{2,0},0;s)=q^{-1}+ (8-2s) + q
  \end{align*}

\item {\bf $(a,b)=(2,2)$:}
  \begin{align*}
    &g=3:& &  G_{\Sigma_2}(\triangle_{2,2},3)= 1 & \hfill 
\\    &g=2:& &  G_{\Sigma_2}(\triangle_{2,2},2)= 3q^{-1} +22+3q
    \\    & g=1: &&   G_{\Sigma_2}(\triangle_{2,2},1)= 3q^{-2} +36q^{-1} +162+
     36q +3q^2
    \\     & g=0 :&&
    G_{\Sigma_2}(\triangle_{2,2},0;0)= q^{-3}+ 14q^{-2}+ 95q^{-1} +416+ 95q +
     14q^{2}+q^{3}
    \\     & &&
     G_{\Sigma_2}(\triangle_{2,2},0;1)= q^{-3}+ 12q^{-2}+ 71q^{-1} +276+ 71q +
     12q^{2}+q^{3}
    \\     & &&
     G_{\Sigma_2}(\triangle_{2,2},0;2)= q^{-3}+ 10q^{-2}+ 51q^{-1} +176+ 51q +
     10q^{2}+q^{3}
     \\     & &&
     G_{\Sigma_2}(\triangle_{2,2},0;3)= q^{-3}+ 8q^{-2}+ 35q^{-1} +108+ 35q +
     8q^{2}+q^{3}
    \\     & &&
     G_{\Sigma_2}(\triangle_{2,2},0;4)= q^{-3}+ 6q^{-2}+ 23q^{-1} +64+ 23q +
     6q^{2}+q^{3}
    \\     & &&
     G_{\Sigma_2}(\triangle_{2,2},0;5)= q^{-3}+ 4q^{-2}+ 15q^{-1} +36+ 15q +
     4q^{2}+q^{3}
 \end{align*}
  
\item {\bf $(a,b)=(3,0)$:}

  \begin{align*}
      &g=4:& &  G_{\Sigma_2}(\triangle_{3,0},4)=1
  \\  &g=3:& &  G_{\Sigma_2}(\triangle_{3,0},3)=4q^{-1} +24+ 4q
 \\  &g=2:& &  G_{\Sigma_2}(\triangle_{3,0},2)=6q^{-2} +58q^{-1} +212+
     58q +6q^2
    \\    & g=1: &&   G_{\Sigma_2}(\triangle_{3,0},1)=4q^{-3}+ 46q^{-2} +260q^{-1} +820+
     260q +46q^2 + 4q^3
    \\     & g=0 :&&
    G_{\Sigma_2}(\triangle_{3,0},0;0)= q^{-4}+ 12q^{-3}+ 81q^{-2} +402q^{-1} +1240+
     402q +81q^2 + 12q^3 +q^4
    \\     & &&
     G_{\Sigma_2}(\triangle_{3,0},0;1)=q^{-4}+ 10q^{-3}+ 59q^{-2} +262q^{-1} +712+
     262q +59q^2 + 10q^3 +q^4
    \\     & &&
     G_{\Sigma_2}(\triangle_{3,0},0;2)= q^{-4}+ 8q^{-3}+ 41q^{-2} +162q^{-1} +384+
     162q +41q^2 + 8q^3 +q^4
     \\     & &&
     G_{\Sigma_2}(\triangle_{3,0},0;3)= q^{-4}+ 6q^{-3}+ 27q^{-2} +94q^{-1} +192+
     94q +27q^2 + 6q^3 +q^4
    \\     & &&
     G_{\Sigma_2}(\triangle_{3,0},0;4)= q^{-4}+ 4q^{-3}+ 17q^{-2} +50q^{-1} +88+
     50q +17q^2 + 4q^3 +q^4
    \\     & &&
     G_{\Sigma_2}(\triangle_{3,0},0;5)= q^{-4}+ 2q^{-3}+ 11q^{-2} +22q^{-1} +40+
     22q +11q^2 + 2q^3 +q^4
 \end{align*}
\end{itemize}

\bibliographystyle{alpha}
\bibliography{../../Biblio.bib}

\end{document}